\newtheorem{theorem}{Theorem}[section]
\newtheorem{lemma}[theorem]{Lemma}
\newtheorem{corollary}[theorem]{Corollary}
\newtheorem{conjecture}[theorem]{Conjecture}
\newtheorem{problem}[theorem]{Problem}
\theoremstyle{definition}
\newtheorem{claim}{\indent Claim}[theorem]      
\newtheorem{remark}[theorem]{Remark}
\newtheorem{case}{\indent Case}[section]
\begin{document}
	\title{\bf Trisimplicial vertices in (fork, odd  parachute)-free graphs}
    \author[1]{\bf Kaiyang Lan \footnote{Email: kylan95@126.com.}}
\author[2]{\bf Feng Liu\footnote{Email: liufeng0609@126.com.}}
\author[3]{\bf Di Wu\footnote{Email: 1975335772@qq.com.}}
\author[4]{\bf Yidong Zhou\footnote{Email: zoed98@126.com.}}
\affil[1]{\footnotesize School of Mathematics and Statistics,
	Minnan Normal University, Zhangzhou, 363000, Fujian, China}
\affil[2]{\footnotesize Department of Mathematics,
	East China Normal University, Shanghai, 200241, China}
\affil[3]{\footnotesize Department of Mathematics and Physics,
	Nanjing Institute of Technology, Nanjing,  211167, Jiangsu, China}
\affil[4]{\footnotesize College of Computer Science,
	Nankai University, Tianjin, 300350, China. Email: zoed98@126.com.}

	\date{}
	\maketitle
\begin{abstract}
An {\em odd hole} in a graph is an induced subgraph which is a cycle of odd length at least five.
An {\em odd parachute} is a graph obtained from an odd hole $H$ by adding a new edge $uv$ such
that $x$ is adjacent to $u$ but not to $v$ for each $x\in V(H)$. 
A graph $G$ is perfectly divisible if for each induced subgraph $H$ of $G$, $V(H)$ can be
partitioned into $A$ and $B$ such that $H[A]$ is perfect and $\omega(H[B])<\omega(H)$.
A vertex of a graph is {\em trisimplicial} if its neighbourhood is the union of three cliques.
In this paper, we prove that $\chi(G)\leq \binom{\omega(G)+1}{2}$ if $G$ is
a (fork, odd parachute)-free graph by showing that $G$ contains a trisimplicial vertex when $G$ is nonperfectly divisible.
This generalizes some results of Karthick, Kaufmann and Sivaraman [{\em Electron. J. Combin.} \textbf{29} (2022) \#P3.19], and Wu and Xu [{\em Discrete Math.} \textbf{347} (2024) 114121].
As a corollary, every nonperfectly divisible claw-free graph contains a trisimplicial vertex.

\smallskip
\noindent{\bf Keywords:} Chromatic number; Claw-free; Induced subgraph
			
\smallskip
\noindent{\bf AMS Subject Classification:} 05C15, 05C38, 05C69
\end{abstract}
	
\section{Introduction}
All graphs in this paper are finite and simple.
For a positive integer $k$, we use $[k]$ to denote the set $\{1,2,\ldots,k\}$.
Let $G$ be a graph.
A $k$-{\em coloring} of $G$ is a mapping $\varphi:V(G)\to[k]$ such that $\varphi(u)\neq\varphi(v)$ whenever $u$ and $v$ are adjacent in $G$.
The {\em chromatic number} of $G$
is the minimum integer $k$ such that $G$ admits a $k$-coloring.
For a vertex set $X\subseteq V(G)$, we use $G[X]$ to denote the subgraph of $G$ induced by $X$, and call $X$ a {\em clique} if $G[X]$ is a complete graph.
The {\em clique number} of $G$ is the maximum size taken over all cliques of $G$.
A graph $G$ is {\em perfect} if $\chi(H)=\omega(H)$ for each induced subgraph $H$ of $G$, and {\em imperfect} otherwise.
We use $\chi(G)$ and $\omega(G)$ to denote the chromatic number and clique number of $G$, respectively.
We use $P_k$ to denote a path on $k$ vertices.
 
We say that a graph $G$ {\em contains} a graph $H$ if some induced subgraph of $G$ is isomorphic to $H$.
A graph is $H$-{\em free} if it does not contain $H$.
When $\mathcal{H}$ is a set of graphs, $G$ is $\mathcal{H}$-{\em free} if $G$ contains no graph of $\mathcal{H}$.
A class of graphs $\mathcal{C}$ is {\em hereditary} if $G\in \mathcal{C}$ then all induced subgraphs of $G$ are also in $\mathcal{C}$.
A hereditary class of graphs $\mathcal{G}$ is called $\chi$-{\em bounded} if
there is a function $f$ (called a $\chi$-{\em binding function}) such that $\chi(G)\leq f(\omega(G))$ for every $G\in \mathcal{G}$.
If $f$ is additionally a polynomial function, then we say that $\mathcal{G}$ is polynomially $\chi$-{\em bounded}.
See \cite{BRIS2004,ISBR2019,AS20} for more results of this subject.

A {\em claw} is the complete bipartite graph $K_{1,3}$, see Figure \ref{Figure eight} for a depiction.
Since claw-free graphs are a generalization of line graphs, the class of claw-free graphs is widely studied in a variety of contexts and has a vast literature, see \cite{RF1997} for a survey.
It is natural to ask what properties of line graphs can be extended to all claw-free graphs.
A detailed and complete structural classification of claw-free graphs has been given by Chudnovsky and Seymour \cite{MCPS2008}.
Brause, Randerath, Schiermeyer and Vumar \cite{CBBR2019} proved that there is no linear $\chi$-binding function
even for a very special class of claw-free graphs.
Kim \cite{JHK95} showed that the Ramsey number $R(3,t)$ has order of magnitude $O(\frac{t^2}{\log t})$, and thus for any claw-free graph $G$, $\chi(G)\leq O(\frac{\omega(G)^2}{\log \omega(G)})$. 
Chudnovsky and Seymour \cite{MC2010} showed that every connected claw-free graph $G$ with a stable set of size at least three satisfies $\chi(G)\leq 2\omega(G)$.
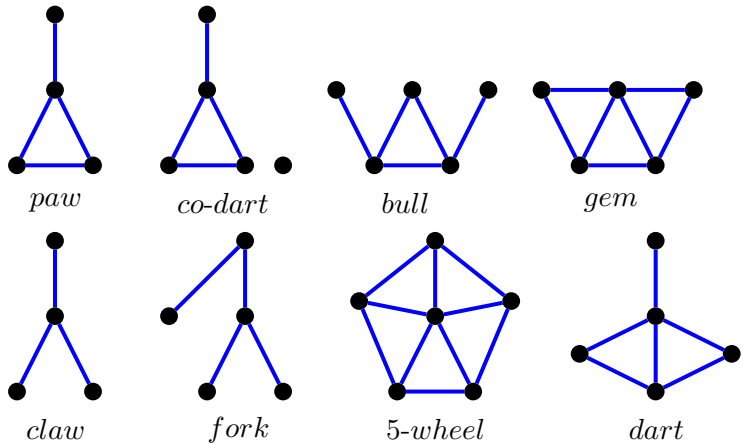
\begin{figure}[h]
	\begin{center}
		\begin{tikzpicture}
			
			\tikzset{std node fill/.style={draw=black, circle,fill=black, line width=1pt, inner sep=2pt}}
			
			\node[std node fill] (a1) at (0,0) {};
			
			\node[std node fill] (a2) at (1,0) {};
			
			\node[std node fill] (a3) at (0.5,1) {};
			
			\node[std node fill] (a4) at (0.5,2) {};
			
			\node[xshift=6pt] at (0.3,-0.5)  {$paw$};
			
			\foreach \i/\j in {1/2,1/3,2/3,3/4}{\draw[blue,line width=1.5pt] (a\i)--(a\j);}
			
			
			\node[std node fill] (b1) at (2,0) {};
			
			\node[std node fill] (b2) at (3,0) {};
			
			\node[std node fill] (b3) at (2.5,1) {};
			
			\node[std node fill] (b4) at (2.5,2) {};
			
			\node[std node fill] (b5) at (3.5,0) {};
			
			\node[xshift=6pt] at (2.5,-0.5)  {$co$-$dart$};
			
			\foreach \i/\j in {1/2,1/3,2/3,3/4}{\draw[blue,line width=1.5pt] (b\i)--(b\j);}

			\node[std node fill] (c1) at (4.7,0) {};
			
			\node[std node fill] (c2) at (5.7,0) {};
			
			\node[std node fill] (c3) at (4.2,1) {};
			
			\node[std node fill] (c4) at (5.2,1) {};
			
			\node[std node fill] (c5) at (6.2,1) {};
			
			\node[xshift=6pt] at (4.9,-0.5)  {$bull$};
			
			\foreach \i/\j in {1/2,1/3,1/4,2/4,2/5}{\draw[blue,line width=1.5pt] (c\i)--(c\j);}

			\node[std node fill] (d1) at (7.4,0) {};
			
			\node[std node fill] (d2) at (8.4,0) {};
			
			\node[std node fill] (d3) at (6.9,1) {};
			
			\node[std node fill] (d4) at (7.9,1) {};
			
			\node[std node fill] (d5) at (8.9,1) {};
			
			\node[xshift=6pt] at (7.6,-0.5)  {$gem$};
			
			\foreach \i/\j in {1/2,1/3,1/4,2/4,2/5,4/3,4/5}{\draw[blue,line width=1.5pt] (d\i)--(d\j);}
			
			\node[std node fill] (x1) at (7.4,-2.5) {};
			
			\node[std node fill] (x2) at (8.4,-3) {};
			
			\node[std node fill] (x3) at (8.4,-1) {};
			
			\node[std node fill] (x4) at (8.4,-2) {};
			
			\node[std node fill] (x5) at (9.4,-2.5) {};
			
			\node[xshift=6pt] at (8.2,-3.5)  {$dart$};
			
			\foreach \i/\j in {1/2,1/4,2/4,2/5,4/3,4/5}{\draw[blue,line width=1.5pt] (x\i)--(x\j);}

			\node[std node fill] (e1) at (0,-3) {};
			
			\node[std node fill] (e2) at (1,-3) {};
			
			\node[std node fill] (e3) at (0.5,-2) {};
			
			\node[std node fill] (e4) at (0.5,-1) {};
			
			\node[xshift=6pt] at (0.3,-3.5)  {$claw$};
			
			\foreach \i/\j in {1/3,2/3,3/4}{\draw[blue,line width=1.5pt] (e\i)--(e\j);}

			\node[std node fill] (f1) at (2.5,-3) {};
			
			\node[std node fill] (f2) at (3.5,-3) {};
			
			\node[std node fill] (f3) at (3,-2) {};
			
			\node[std node fill] (f4) at (3,-1) {};
			
			\node[std node fill] (f5) at (2,-2) {};
			
			\node[xshift=6pt] at (2.7,-3.5)  {$fork$};
			
			\foreach \i/\j in {1/3,2/3,3/4,4/5}{\draw[blue,line width=1.5pt] (f\i)--(f\j);}

			\node[std node fill] (g1) at (5,-3) {};
			
			\node[std node fill] (g2) at (6,-3) {};
			
			\node[std node fill] (g3) at (4.5,-1.8) {};
			
			\node[std node fill] (g4) at (5.5,-2) {};
			
			\node[std node fill] (g5) at (6.5,-1.8) {};
			
			\node[std node fill] (g6) at (5.5,-1) {};
			
			\node[xshift=6pt] at (5.3,-3.5)  {5-$wheel$};
			
			\foreach \i/\j in {1/2,1/3,1/4,2/4,2/5,4/3,4/5,4/6,3/6,5/6}{\draw[blue,line width=1.5pt] (g\i)--(g\j);}


		\end{tikzpicture}
		\caption{\small {Illustrations of the configurations.}}\label{Figure eight}
	\end{center}
\end{figure}
In general, Chudnovsky and Seymour \cite{MC2010} proved the following upper bound on the chromatic number for claw-free graphs.

\begin{theorem}[Chudnovsky-Seymour \cite{MC2010}]\label{claww2}
	Every claw-free graph $G$ satisfies $\chi(G)\leq \omega(G)^2$. Moreover, the bound is asymptotically tight.
\end{theorem}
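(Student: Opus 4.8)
The plan is to reduce to connected graphs, split on the independence number $\alpha(G)$ (the largest size of a stable set of $G$), invoke the bound of Chudnovsky and Seymour \cite{MC2010} recalled above in the main case, and dispose of the remaining case by an elementary Ramsey-and-matching computation; the tightness I would obtain from complements of triangle-free Ramsey graphs. Since $\chi$ and $\omega$ are each the maximum of the respective parameter over the connected components, and claw-freeness is hereditary, it suffices to prove $\chi(G)\le\omega(G)^2$ for connected $G$, and I may assume $\omega(G)\ge 2$ (a connected graph with $\omega(G)\le 1$ being a single vertex). If moreover $\alpha(G)\ge 3$, then $G$ is a connected claw-free graph with a stable set of size at least three, so by the theorem of Chudnovsky and Seymour \cite{MC2010} recalled above $\chi(G)\le 2\omega(G)\le\omega(G)^2$, using $\omega(G)\ge 2$.

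It remains to treat $\alpha(G)\le 2$. If $\alpha(G)=1$ then $G$ is complete and $\chi(G)=\omega(G)\le\omega(G)^2$, so assume $\alpha(G)=2$, i.e.\ the complement $\overline{G}$ is triangle-free. Every colour class of a proper colouring of $G$ is a stable set of $G$, hence a clique of $\overline{G}$, hence a single vertex or an edge of $\overline{G}$; thus $\chi(G)$ is the least number of vertices and edges of $\overline{G}$ needed to cover $V(G)$, which equals $|V(G)|-\nu(\overline{G})$, where $\nu$ denotes the matching number. If $M$ is a maximum matching of $\overline{G}$, then by maximality the vertices missed by $M$ form a stable set of $\overline{G}$, i.e.\ a clique of $G$, so at most $\omega(G)$ vertices are missed; hence $\nu(\overline{G})\ge\tfrac12\bigl(|V(G)|-\omega(G)\bigr)$ and $\chi(G)\le\tfrac12\bigl(|V(G)|+\omega(G)\bigr)$. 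Also $\overline{G}$ is triangle-free with $\alpha(\overline{G})=\omega(G)$, so by Ramsey's theorem $|V(G)|<R(3,\omega(G)+1)\le\binom{\omega(G)+2}{2}$. Substituting $|V(G)|\le\binom{\omega(G)+2}{2}-1$ and simplifying gives $\chi(G)\le\tfrac14\bigl(\omega(G)^2+5\omega(G)\bigr)\le\omega(G)^2$, the last step again using $\omega(G)\ge 2$. This completes the upper bound.

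For asymptotic tightness I would invoke Kim's lower bound $R(3,t)=\Omega(t^2/\log t)$ \cite{JHK95}: for every large $t$ there is a triangle-free graph $H$ on $N=\Omega(t^2/\log t)$ vertices with $\alpha(H)\le t$. Put $G:=\overline{H}$. Then $G$ is claw-free, since an induced $K_{1,3}$ in $G$ would correspond to an induced copy of $K_3$ together with an isolated vertex in $H$, contradicting triangle-freeness; moreover $\omega(G)=\alpha(H)\le t$, while, exactly as in the previous paragraph, $\chi(G)=N-\nu(H)\ge N/2=\Omega(t^2/\log t)$. Since $x\mapsto x^2/\log x$ is increasing for $x\ge 2$ and $t\ge\omega(G)$, we get $\chi(G)=\Omega\bigl(\omega(G)^2/\log\omega(G)\bigr)$, so the exponent $2$ in the bound $\omega(G)^2$ cannot be lowered.

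The single hard ingredient is the quoted inequality $\chi(G)\le 2\omega(G)$ for connected claw-free graphs with a stable set of size at least three: this rests on the Chudnovsky--Seymour structure theorem for claw-free graphs, and that is where essentially all of the difficulty sits, whereas the case $\alpha(G)\le 2$ and the tightness construction are routine. (Were one to avoid the structure theorem, the obstacle would instead become re-proving even a polynomial bound when $\alpha(G)\ge 3$.)
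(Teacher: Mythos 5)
The paper does not prove this statement: Theorem \ref{claww2} is quoted from Chudnovsky and Seymour \cite{MC2010} without proof, so there is no in-paper argument to compare yours against. Judged on its own, your derivation is correct: the reduction to connected components, the use of the quoted bound $\chi(G)\le 2\omega(G)$ when $\alpha(G)\ge 3$ together with $\omega(G)\ge 2$, the identity $\chi(G)=|V(G)|-\nu(\overline{G})$ when $\alpha(G)\le 2$, the estimate $\nu(\overline{G})\ge\tfrac12(|V(G)|-\omega(G))$ from the maximality of a matching, the Ramsey bound $|V(G)|<R(3,\omega(G)+1)\le\binom{\omega(G)+2}{2}$, and the resulting inequality $\tfrac14(\omega(G)^2+5\omega(G))\le\omega(G)^2$ for $\omega(G)\ge 2$ all check out, as does the tightness construction from complements of Kim's triangle-free Ramsey graphs. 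Of course, all of the real difficulty is outsourced to the two quoted ingredients (the structure-theorem-based $2\omega$ bound and Kim's $R(3,t)$ estimate), which is unavoidable here and is essentially how the original source proceeds; your write-up is a faithful and complete assembly of those pieces into the stated theorem.
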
 

A graph is {\em perfectly divisible} \cite{CTH2018} if for each of its induced subgraph $H$, $V(H)$ can be partitioned into two sets $A$ and $B$ such that $H[A]$ is perfect and $\omega(H[B])<\omega(H)$.
See \cite{RC2023,MC2021,MC2019,CTH2018,WD2022,TKJK2022,DW2023} for more results of this flavour.
Note that by a simple induction on $\omega(G)$ we have that $\chi(G)\leq \binom{\omega(G)+1}{2}$ for each perfectly divisible graph.
A graph is {\em nonperfectly divisible} if it is not perfectly divisible.
A {\em fork} is the graph obtained from claw by subdividing an edge once, see Figure \ref{Figure eight} for a depiction.
A fork graph is also known as a chair graph.
The class of fork-free graphs is an immediate superclass of claw-free graphs and $P_4$-free
graphs, both of which are polynomially $\chi$-bounded.
It is natural to see what properties of claw-free graphs are also enjoyed by fork-free graphs.
Sivaraman \cite{TKJK2022} proposed the following conjecture.

\begin{conjecture}[Sivaraman \cite{TKJK2022}]\label{forkconj}
	The class of fork-free graphs is perfectly divisible.
\end{conjecture}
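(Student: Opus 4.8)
The plan is to reduce the conjecture to a single-partition statement and then attack that statement by a local analysis of odd holes in fork-free graphs, using the three-clique (trisimplicial) decomposition of neighbourhoods as the main engine.

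First I would record a standard reduction. Since the class of fork-free graphs is hereditary and perfect divisibility is a condition imposed on \emph{every} induced subgraph, it suffices to prove that every fork-free graph $G$ admits a single partition $V(G)=A\cup B$ with $G[A]$ perfect and $\omega(G[B])<\omega(G)$: applying this to each induced subgraph of $G$ (all of which are again fork-free) then yields perfect divisibility. I may assume $G$ is connected (otherwise partition each component separately) and imperfect (otherwise take $A=V(G)$ and $B=\emptyset$), and I would argue by induction on $|V(G)|$, fixing a minimal counterexample in which every proper induced subgraph is perfectly divisible.

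The heart of the matter is to produce a set $B$ with $\omega(G[B])\le\omega(G)-1$ that meets every odd hole and every odd antihole of $G$; equivalently, a \emph{low-clique transversal} of the imperfection of $G$. The naive choice $B=N(v)$ for a single vertex $v$ already has the right clique number, since $\{v\}\cup Q$ is a clique for every clique $Q\subseteq N(v)$ and hence $\omega(G[N(v)])\le\omega(G)-1$; but its anti-neighbourhood $V(G)\setminus N[v]$ may still host an odd hole, so a more global $B$ is required. My plan is therefore to fix a shortest odd hole $C$ and to classify, using fork-freeness, the ways in which external vertices attach to $C$. Fork-freeness is exactly the tool that forbids a vertex carrying a claw-like attachment with a pendant, so the attachment types along $C$ are highly restricted; antiholes I would treat separately, noting that the only odd antihole that is also an odd hole is $C_5$, while longer odd antiholes are claw-free and thus tightly constrained inside $G$. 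The aim of the classification is to cover the neighbourhoods that govern all odd holes by a bounded family of cliques, and then to assemble these into a single set $B$ of clique number below $\omega(G)$.

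Here the trisimplicial viewpoint of the paper is the engine: the decomposition of a vertex neighbourhood into three cliques is precisely what makes such a local clique cover possible, and I would try to upgrade the single-vertex statement into one about the entire neighbourhood system of $C$. The hard part, and the reason the conjecture is open, is the \emph{odd parachute}: it is exactly the configuration---an odd hole together with a near-universal dominating edge---that destroys the three-clique decomposition of the relevant neighbourhood, so the transversal construction breaks down whenever an odd parachute is present. Removing the odd-parachute-free hypothesis to reach the full conjecture therefore demands a substitute decomposition valid in the presence of such dominating edges, and finding one is the principal obstacle I expect to confront.
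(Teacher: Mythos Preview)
The statement you are attempting is Conjecture~\ref{forkconj}, which the paper presents as an \emph{open} conjecture of Sivaraman; the paper neither claims nor provides a proof of it. What the paper actually proves is the weaker Theorem~\ref{forkgem+tri}, which assumes in addition that $G$ is odd-parachute-free, and even then the conclusion is not perfect divisibility but the existence of a trisimplicial vertex in any nonperfectly divisible such graph. So there is no ``paper's own proof'' of this statement to compare against.

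Your proposal is not a proof either, and you say so yourself: you correctly isolate the odd parachute as the configuration that breaks the three-clique decomposition, and then explicitly leave the construction of a substitute decomposition as ``the principal obstacle I expect to confront.'' That is an accurate diagnosis of why the conjecture is open, but it is a plan rather than an argument.

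Two further remarks on the plan. First, the strategy of building $B$ as a low-clique transversal of all odd holes and antiholes is reasonable in spirit, but ``fix a shortest odd hole $C$ and classify attachments to $C$'' only analyses one odd hole; nothing in the outline explains how the local clique covers around $C$ would assemble into a single $B$ hitting \emph{every} odd hole and antihole simultaneously. Second, the paper's actual method (for the restricted class) is rather different from your transversal idea: it does not try to exhibit the partition $(A,B)$ at all. Instead it invokes Theorem~\ref{ballon} to get an odd balloon inside any nonperfectly divisible $G$, takes a minimum one, and by a detailed case analysis on how $N(u)$ (for the balloon's center $u$) meets the hole shows that $N(u)$ is a union of three cliques. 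The chromatic bound in Corollary~\ref{MTHM} then follows by deleting a trisimplicial vertex and inducting on $|V(G)|$, not by constructing a perfect-divisibility partition. The paper's own open Problem at the end (every fork-free graph is perfectly divisible or has a trisimplicial vertex) is the natural strengthening in that direction, and it too remains unproved.
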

Conjecture \ref{forkconj} is still open,
here we briefly list some results related to $(\text{fork},H)$-free graphs for some small graph $H$, and refer the readers to \cite{MC2021,TKJK2022,DW2023cr} for more information on Conjecture \ref{forkconj} and related problems.

Kierstead and Penrice \cite{HAK1994} showed that for any tree $T$ of radius two, the class of $T$-free
graphs is $\chi$-bounded.
As a corollary, we see that fork-free graphs are $\chi$-bounded.
In \cite{TKJK2022} and \cite{BRIS2004}, the authors asked that whether there exists a polynomial (in particular, quadratic) $\chi$-binding
function for fork-free graphs.
Very recently, Liu, Schroeder, Wang and Yu \cite{XL2023} answered the problem and proved that
$\chi(G) \leq 7\omega(G)^2$ for every fork-free graph $G$.
This is a significant progress on Conjecture \ref{forkconj} that motivates us to pay more attention to the perfect divisibility of fork-free graphs.

A {\em hole} in a graph is an induced cycle of length at least four.
A {\em wheel} is a graph obtained from a hole $H$ by adding a new vertex adjacent to every vertex of $H$.
An $i$-{\em wheel} is a wheel such that its hole has $i$ vertices.
An $i$-wheel is called an {\em odd wheel} if $i$ is odd.
A {\em paw} is a graph obtained from $K_{1,3}$ by adding an edge joining two of its leaves, a {\em co-dart} is the union of $P_1$ and a paw, a {\em bull} is a graph consisting of a triangle with two disjoint pendant edges, a {\em gem} is the graph that consists of an induced four-vertex path plus a vertex which is adjacent to all the vertices of that path, see Figure \ref{Figure eight} for a depiction.
A {\em balloon} is a graph obtained from a hole by identifying respectively two consecutive vertices with two leaves of $K_{1,3}$. 
An $i$-{\em balloon} is a balloon such that its hole has $i$ vertices.
An $i$-balloon is called an {\em odd balloon} if $i$ is odd, see Figure \ref{Figure parachute} for a depiction.

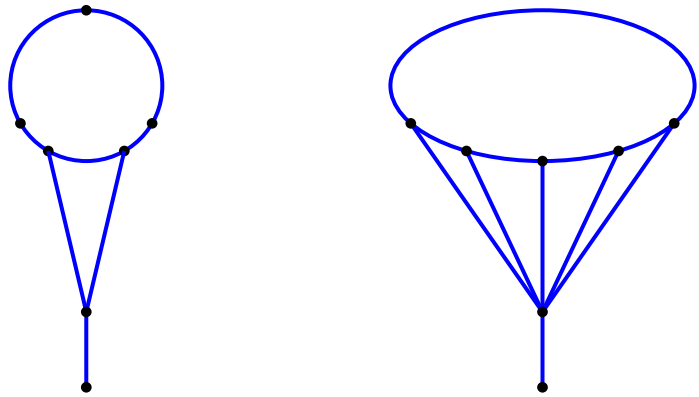
\begin{figure}[h]
	\begin{center}
\begin{tikzpicture}
	
	\draw[blue,line width=1.5pt] (-6,0) circle[radius=1];
	\foreach \angle in {210,240,90,300,330} {
		\fill ({cos(\angle)-6}, {sin(\angle)}) circle (2pt);
	}
	\foreach \angle in {240,300} {
		\draw[blue,line width=1.5pt] ({cos(\angle)-6}, {sin(\angle)}) -- (-6,-3);
	}
	\draw[blue,line width=1.5pt] (-6,-3) -- (-6,-4);
	\fill[] (-6,-3) circle (2pt);
	\fill[] (-6,-4) circle (2pt);

	\draw[blue,line width=1.5pt] (0,0) ellipse(2 and 1);
	\foreach \angle in {210,240,270,300,330} {
		\draw[blue,line width=1.5pt] ({2*cos(\angle)}, {sin(\angle)}) -- (0,-3);
		\fill ({2*cos(\angle)}, {sin(\angle)}) circle (2pt);
	}
	\draw[blue,line width=1.5pt] (0,-3) -- (0,-4);
	\fill[] (0,-3) circle (2pt);
	\fill[] (0,-4) circle (2pt);
\end{tikzpicture}
	\caption{\small {5-balloon and 5-parachute.}}\label{Figure parachute}
\end{center}
\end{figure}
Karthick, Kaufmann and Sivaraman \cite{TKJK2022} proved that Conjecture \ref{forkconj} is true on $(\text{fork}, H)$-free graphs when $H\in \{P_6,\text{co-dart, bull}\}$.
Recently, Wu and Xu \cite{DW2023} generalized these results in \cite{TKJK2022}, they proved that 

\begin{theorem}[Wu-Xu \cite{DW2023}]\label{ballon}
	The class of $($fork, odd balloon$)$-free graphs is perfectly divisible.
\end{theorem}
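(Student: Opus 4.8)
The plan is to argue by induction on $|V(G)|$, reducing to a structural core built around a shortest odd hole. First the elementary reductions. Since the class of (fork, odd balloon)-free graphs is hereditary and $K_1$ is perfect, it suffices to produce, for each such $G$, a partition $V(G)=A\cup B$ with $G[A]$ perfect and $\omega(G[B])<\omega(G)$. If $G$ is perfect we take $A=V(G)$. If $G$ is disconnected, or has a clique cutset $K$ splitting it into $G_1$ and $G_2$, we amalgamate divisions $G_i=A_i\cup B_i$ into one for $G$ by placing a vertex of $K$ on the $B$-side only if it lies on the $B$-side in both pieces; a short check --- using that adding a vertex complete to a clique of a perfect graph keeps it perfect, and that no clique straddles a clique cutset --- shows this is a valid division. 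So we may assume $G$ is connected, imperfect, and has no clique cutset, and, by induction, that every proper induced subgraph of $G$ is perfectly divisible. By the Strong Perfect Graph Theorem, $G$ contains an odd hole or an odd antihole; the odd antihole case admits a similar but shorter treatment (on a shortest odd antihole, fork-freeness confines the structure enough that a direct argument produces the division), so assume from now on that $G$ has an odd hole.

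The heart of the argument is an attachment analysis at a shortest odd hole $C=c_1c_2\cdots c_{2k+1}$. For a vertex $v\notin V(C)$ one classifies $N(v)\cap V(C)$. Fork-freeness is very restrictive. If $v$ has a single neighbour $c_i$ on $C$, then $\{c_i,v,c_{i-1},c_{i+1},c_{i+2}\}$ induces a fork with centre $c_i$. If $v$ has exactly two neighbours on $C$ at distance at least two, one gets a fork (when they lie at distance $2$, say $c_i$ and $c_{i+2}$, then $\{c_{i-1},c_i,c_{i+1},c_{i+2},v\}$ induces one) or, when they are far apart on $C$, a strictly shorter odd hole. Similar short checks show that the only surviving patterns for $N(v)\cap V(C)$ are: empty, a contiguous arc, exactly two consecutive vertices, or all of $V(C)$; and a shortest-hole argument further rules out all but a few arc lengths. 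Odd-balloon-freeness then controls the remaining configurations. If $N(v)\cap V(C)=\{c_i,c_{i+1}\}$, then every neighbour of $v$ outside $V(C)$ must have a neighbour on $C$ --- otherwise $C$ together with $v$ and that neighbour is an odd balloon. And if $v$ and $v'$ both attach exactly to $\{c_i,c_{i+1}\}$, then $v\sim v'$ --- else $\{c_{i+1},v,v',c_{i+2},c_{i+3}\}$ induces a fork with centre $c_{i+1}$ --- so the vertices attached to a fixed edge of $C$, together with that edge, form a clique; similarly, vertices attached to an arc turn out to be twins of an interior vertex of that arc. Putting this together, and using connectedness together with the absence of a clique cutset to pin down the vertices with no neighbour on $C$, I would conclude that $G$ arises from a local blow-up of $C$ (each $c_i$ replaced by a clique, consecutive cliques completely joined) by adding a set $H$ of hub-like vertices, each adjacent to a long arc of $C$ or to all of $C$. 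Since $V(C)\not\subseteq H$, the graph $G[H]$ is a proper induced subgraph, hence perfectly divisible by the inductive hypothesis.

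Finally one builds the division. Write $G-H$ as a blow-up $V_1\cup\cdots\cup V_{2k+1}$ of $C$, indexed so that $|V_{2k+1}|+|V_1|$ is minimum; then $\omega(G-H)=\max_i(|V_i|+|V_{i+1}|)$. Take a division $H=A_H\cup B_H$. Since $G[V_1\cup\cdots\cup V_{2k}]$ is a blow-up of a path, it is perfect, and setting $A=A_H\cup(V_1\cup\cdots\cup V_{2k})$ and $B=B_H\cup V_{2k+1}$, one checks that $G[A]$ is perfect (from the way $H$ attaches to $G-H$) and that every maximum clique of $G$, which lives on at most two consecutive $V_i$ together with vertices of $H$, meets $A$: inside $A_H$ if it uses $H$, and inside $V_1\cup\cdots\cup V_{2k}$ otherwise; hence $\omega(G[B])<\omega(G)$. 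The main obstacle is twofold. First, the attachment classification, though routine, is a genuine and somewhat lengthy case analysis. Second, and more delicate, is pinning down exactly how the hub-like set $H$ meets the blow-up $G-H$: hub-like vertices can themselves organise into odd holes of length $|C|$, so $G[H]$ need not be perfect, and the bookkeeping needed to merge its inductive division with the path-blow-up division of $G-H$ --- and to verify that the resulting $A$ is perfect and is a transversal of the maximum cliques --- is where the real work lies. A cleaner way to organise the last two steps may be to fix a vertex-minimal counterexample $G$ at the outset and derive a contradiction directly from the structure around $C$, rather than constructing the division by hand.
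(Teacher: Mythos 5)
First, a framing point: the paper does not prove this statement. Theorem~\ref{ballon} is quoted from Wu and Xu \cite{DW2023} and used as a black box, so there is no in-paper proof to compare yours against. Judged on its own terms, your outline gets the right local ingredients --- the attachment analysis at a shortest odd hole is essentially Lemma~\ref{CLMA1}, and balloon-freeness does force any vertex at distance two from the hole to reach it through a vertex complete to the hole --- but the global assembly has genuine gaps.

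The most serious is the clique-cutset reduction. Your rule ``put $v\in K$ in $B$ only if it lies in $B_1\cap B_2$'' makes $A\cap V(G_1)=A_1\cup(K\cap B_1\cap A_2)$, and the justification you offer (adding a vertex complete to a clique preserves perfection) does not apply: a vertex of $K\cap B_1$ need not be simplicial in $G_1[A_1\cup\{v\}]$, since its neighbourhood inside $A_1$ can be arbitrary, so $G[A\cap V(G_1)]$ need not be perfect. Closure of perfect divisibility under clique cutsets is precisely the kind of statement the literature avoids relying on; the standard substitute is the observation that in a vertex-minimal non-perfectly-divisible graph $G$, every vertex $v$ must have $G[M(v)]$ imperfect, because otherwise $\bigl(M(v)\cup\{v\},\,N(v)\bigr)$ is already a perfect division (the first part is a perfect graph plus an isolated vertex, and every clique of $N(v)$ extends by $v$). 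That one observation replaces your connectivity and cutset reductions and hands you an odd hole or antihole anticomplete to a chosen vertex, which is exactly where Lemma~\ref{CLMA1} and balloon-freeness bite. The second gap is the endgame: you assert that $A=A_H\cup(V_1\cup\dots\cup V_{2k})$ induces a perfect graph and that $\omega\bigl(G[B_H\cup V_{2k+1}]\bigr)<\omega(G)$, but neither is checked and neither is automatic. A perfect $G[A_H]$ joined to a path blow-up can contain odd holes passing through nonadjacent hubs, and a clique of $B_H$ can extend into $V_{2k+1}$ up to size $\omega(G)$ unless you prove a quantitative relation between $\omega(G[H])$, $\omega(G[B_H])$ and $|V_{2k+1}|$; you acknowledge both difficulties but do not resolve them. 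Together with the waved-away odd antihole case, these are the places where the real proof has to live; as written the proposal is a programme rather than a proof.
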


As an application, they also proved that

\begin{theorem}[Wu-Xu \cite{DW2023}]\label{forkgem}
	Every $(\text{fork, gem})$-free $G$ satisfies $\chi(G)\leq \binom{\omega(G)+1}{2}$.
\end{theorem}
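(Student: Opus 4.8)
The plan is to follow the strategy announced in the abstract: show that a nonperfectly divisible (fork, gem)-free graph has a trisimplicial vertex, and deduce the bound by a minimal-counterexample argument. Suppose $G$ is (fork, gem)-free with $\chi(G)>\binom{\omega(G)+1}{2}$ and $|V(G)|$ minimum. Every (fork, gem)-free graph on fewer vertices satisfies the bound, so if some vertex $v$ had $\deg(v)<\binom{\omega(G)+1}{2}$ we could extend a $\binom{\omega(G)+1}{2}$-colouring of $G-v$ to $G$; hence $\delta(G)\ge\binom{\omega(G)+1}{2}$. Also $G$ is not perfectly divisible (otherwise the simple induction on $\omega$ noted in the introduction applies), so $G$ is imperfect, and since $\overline{C_5}=C_5$ while every odd antihole of length at least $7$ contains a gem (a vertex there dominates an induced $P_4$), $G$ contains an odd hole; fix a shortest one $C=c_1\cdots c_{2k+1}$. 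If $G$ has a trisimplicial vertex $v$, then $N(v)$ is a union of three cliques, each of size at most $\omega(G)-1$, so $\deg(v)\le 3(\omega(G)-1)$; together with $\delta(G)\ge\binom{\omega(G)+1}{2}$ this forces $\omega(G)\in\{2,3\}$. The case $\omega(G)=2$ is the statement that a triangle-free fork-free graph is $3$-colourable, and the case $\omega(G)=3$ is handled by a finer recolouring argument using that $N(v)$ is triangle-free and covered by three cliques, hence $3$-colourable; so the real content is to produce a trisimplicial vertex (cleanly when $\omega(G)\ge 4$, with this extra care in general).

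To produce the trisimplicial vertex I would first make the standard primality reductions. A clique cutset would split $G$ into inductively bounded blocks with the same bound, a contradiction; a homogeneous set $M$ with $1<|M|<|V(G)|$ would exhibit $G$ as the substitution of $G[M]$ into a vertex of the (again (fork, gem)-free) graph $G/M$, whence $\omega(G)=\omega(G[M])+\omega(G/M)-1$ and $\chi(G)\le\chi(G[M])+\chi(G/M)-1\le\binom{\omega(G[M])+1}{2}+\binom{\omega(G/M)+1}{2}-1\le\binom{\omega(G)+1}{2}$, again a contradiction. So assume $G$ is prime. Now analyse attachments to $C$: gem-freeness forbids a vertex outside $C$ from being complete to $C$ (an odd wheel, which contains a gem) or adjacent to four consecutive vertices of $C$ (these four plus the vertex form a gem), so for each such $v$ the set $N(v)\cap V(C)$ is a disjoint union of arcs of length at most three; and fork-freeness kills the thin attachments, since a vertex with exactly one neighbour on $C$, or with two non-consecutive neighbours on $C$, completes an induced fork with a claw centred at a hole vertex and a hole edge as its pendant. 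Classifying $V(G)\setminus V(C)$ by the surviving attachment types and applying fork-freeness to pairs of near vertices should corner some vertex — most naturally a hole vertex $c_i$, whose neighbourhood beyond $\{c_{i-1},c_{i+1}\}$ the analysis confines to two more cliques — into being a union of three cliques.

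The decisive and hardest step is exactly this last one. Fork-freeness alone is far too weak for local control — $K_{1,n}$ is fork-free, so neighbourhoods in a fork-free graph can have unbounded independence number — so the argument must genuinely combine the odd hole with gem-freeness, and in particular must handle the vertices anticomplete to $C$ together with their interaction with the vertices attached to $C$. I expect the residual bad configurations to be precisely those in which $G$ contains a whole odd balloon — an odd hole, a vertex attached to just one of its edges, and a pendant neighbour of that vertex — and in those cases Theorem~\ref{ballon} finishes the job for free, since (fork, odd balloon)-freeness forces perfect divisibility. Packaging the arc classification, the fork exclusions for near/distant pairs, and these balloon-driven contradictions into a proof that every surviving case yields a trisimplicial vertex (or a clique cutset, or a homogeneous set) is the bulk of the work; the rest is bookkeeping.
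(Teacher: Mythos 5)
Your global frame is sound and matches the paper's strategy: in a minimal counterexample one has $\delta(G)\ge\binom{\omega(G)+1}{2}$, a trisimplicial vertex has degree at most $3(\omega(G)-1)$, and this is a contradiction once $\omega(G)\ge4$, with the small clique numbers handled separately (the paper quotes Randerath's bound $\chi\le 4$ rather than a recolouring argument, which you would still need to make precise for $\omega(G)=2$). Note also that the paper does not reprove this statement: it cites it from Wu--Xu and derives a generalization, Corollary \ref{MTHM}, from Theorem \ref{forkgem+tri}, using that a gem-free graph is automatically odd-parachute-free (the dominating vertex of a parachute together with an induced $P_4$ of its hole is a gem). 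The problem is that everything after your frame is a plan rather than a proof: the entire mathematical content of the result is the production of a trisimplicial vertex in a nonperfectly divisible graph, and you explicitly defer it (``the bulk of the work'', ``the rest is bookkeeping'').

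Worse, the one concrete idea you offer for closing that gap is backwards. Theorem \ref{ballon} says that (fork, odd balloon)-free graphs are perfectly divisible; hence a nonperfectly divisible fork-free graph necessarily \emph{contains} an odd balloon, and the configurations containing an odd balloon are therefore not ``finished for free'' --- they are the only configurations left, and they are where all of the difficulty lives. The paper's proof of Theorem \ref{forkgem+tri} begins exactly there: it takes a \emph{minimum} induced odd balloon with center $u$, hole $C$ and pendant vertex $s$, and, assuming no vertex of $G$ is trisimplicial, runs a long sequence of claims (Claims \ref{vivj} through \ref{CLaimtu}, together with (\ref{ALIGN2}), (\ref{T+clique}), (\ref{Q1clique}), (\ref{T+Q1v12}) and (\ref{qunion})) partitioning $N(u)$ into pieces $\{v_1,v_2\}$, $D$, $T^{+}$, $Q_1$, $Q'$ and showing each is a clique or a union of two cliques; the minimality of the balloon, Lemma \ref{CLMA1}, and the non-trisimpliciality of $v_5$ and of $s$ are all used essentially along the way, and the conclusion that $u$ itself is trisimplicial is the final contradiction. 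Neither this analysis nor any substitute for it appears in your proposal (your shortest-odd-hole attachment classification is a plausible alternative starting point, but it is precisely the vertices anticomplete to $C$ and their interaction with the attached vertices --- the balloon configurations --- that you leave unhandled), so the proof is not complete.
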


A {\em parachute} is a graph obtained from a hole $H$ by adding an edge $uv$ such
that $u$ is complete to $V(H)$ and $v$ is anticomplete to $V(H)$. 
An $i$-{\em parachute} is a parachute such that its hole has $i$ vertices.
An $i$-parachute is called an {\em odd parachute} if $i$ is odd, see Figure \ref{Figure parachute} for a depiction.
In this paper, we consider the class of $(\text{fork}, \text{odd parachute})$-free graphs
which is an immediate superclass of both claw-free graphs and $(\text{fork}, \text{gem})$-free graphs.
A vertex of a graph is {\em bisimplicial} (resp., {\em trisimplicial}) if its neighborhood is the union of two (resp., three) cliques.
More precisely, we prove that
 \begin{theorem}\label{forkgem+tri}
	Let $G$ be a $(\text{fork}, \text{odd parachute})$-free graph.
	Then $G$ contains a trisimplicial vertex if $G$ is nonperfectly divisible.
\end{theorem}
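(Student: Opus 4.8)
Since a nonperfectly divisible graph is imperfect, it suffices to prove that every $(\text{fork},\text{odd parachute})$-free graph is perfectly divisible or contains a trisimplicial vertex. I would take $G$ to be a $(\text{fork},\text{odd parachute})$-free graph with no trisimplicial vertex that is not perfectly divisible, with $|V(G)|$ minimum, and derive a contradiction. By the Strong Perfect Graph Theorem $G$ contains an odd hole or an odd antihole. First I dispose of the odd-hole-free case: every odd antihole $\overline{C_{2m+1}}$ with $2m+1\ge 7$ has independence number $2$, hence is claw-free (so lies in the class), and is perfectly divisible --- take $B$ to be $2m-2$ consecutive rim vertices, so that $\omega(\overline{C_{2m+1}}[B])=\alpha(P_{2m-2})=m-1<m=\omega(\overline{C_{2m+1}})$, and note that every proper induced subgraph of $\overline{C_{2m+1}}$ is the complement of a linear forest, hence perfect (so one may take $A=V(\overline{C_{2m+1}})\setminus B$). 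Together with a short analysis of how outside vertices can attach to a longest odd antihole of a fork-free graph, this shows that a $(\text{fork},\text{odd parachute})$-free graph with no odd hole is perfectly divisible, contradicting the minimality of $G$; hence $G$ contains an odd hole.

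The core of the argument is a structural analysis relative to a shortest odd hole $C=c_1c_2\cdots c_{2m+1}$. I would first classify $N(x)\cap V(C)$ for $x\notin V(C)$. Minimality of $|V(C)|$ forbids a vertex from having neighbours on $C$ separated by an even gap, and fork-freeness kills the remaining partial patterns --- for example, if $x$ is adjacent to $c_1$ and $c_3$ but not to $c_2,c_4,c_5$, then $\{x,c_2,c_3,c_4,c_5\}$ induces a fork with centre $c_3$. One should be left with: $x$ anticomplete to $C$; $x$ adjacent to exactly one vertex, or to two or three consecutive vertices, of $C$; or $x$ complete to $C$. The odd parachute governs the last type precisely: if $x$ is complete to $C$ then every neighbour of $x$ has a neighbour on $C$, for otherwise $x$, that neighbour and $C$ induce an odd parachute. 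From this one reads off the global shape of $G$ near $C$ --- a ``hub'' complete to $C$, a ``near'' part hanging on short arcs of $C$, and a part anticomplete to $C$ --- with tightly restricted adjacencies between these groups.

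I would then look for the trisimplicial vertex at a hole vertex $c_i$. Its neighbourhood is $\{c_{i-1},c_{i+1}\}$ --- two cliques lying on $C$ --- together with its off-hole neighbours, each of which (by the classification) is a near vertex complete to an arc of $C$ through $c_i$ or a hub vertex; fork-freeness applied at $c_i$ should force these remaining neighbours into at most one further clique (note, e.g., that a hole vertex cannot have three pairwise non-adjacent hub neighbours together with a near neighbour on an arc missing it, as this yields a fork), making $N(c_i)$ a union of three cliques. If instead some configuration survives this at every hole vertex, it should be of a restricted, wheel-like type, and one finishes by deriving a contradiction with the fact that $G$ is nonperfectly divisible and minimal: from such a configuration one extracts a partition $V(G)=A\cup B$ in which $G[A]$ is assembled from the two hole-cliques and the hub and is checked to contain no odd hole or odd antihole (hence is perfect), while $B$ omits a vertex of every maximum clique, so $\omega(G[B])<\omega(G)$.

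I expect the main obstacle to be this core step: proving that a shortest odd hole in a $(\text{fork},\text{odd parachute})$-free graph admits only the listed attachment types, and then, at each hole vertex, excluding the configurations whose neighbourhood would need four or more cliques --- in particular getting the interplay between the hub (controlled by odd-parachute-freeness) and the near vertices (controlled by fork-freeness) exactly right, and correctly closing the residual wheel-like case against the divisibility hypothesis. A secondary difficulty is the clean treatment of the odd-hole-free case, since a fork-free imperfect graph need not contain an odd hole (for instance $\overline{C_9}$), together with the small-length bookkeeping, notably $C_5$.
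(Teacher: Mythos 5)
Your plan diverges from the paper's at the very first move, and the divergence creates gaps that I do not think can be closed as sketched. The paper does not use the Strong Perfect Graph Theorem or a shortest odd hole at all: it invokes the Wu--Xu theorem that $(\text{fork},\text{odd balloon})$-free graphs are perfectly divisible, so a nonperfectly divisible $G$ contains an induced odd balloon, and it anchors the whole analysis at a \emph{minimum} odd balloon --- an odd hole $C$, a vertex $u$ adjacent to two consecutive hole vertices, and a pendant vertex $s$ adjacent only to $u$. That pendant vertex $s$, anticomplete to $C$, is the engine of essentially every fork argument in the proof (configurations of the form $\{s,u,t,\cdot,\cdot\}$) and is what makes Lemma 2.1 applicable; the trisimplicial vertex is ultimately found at $u$, not at a hole vertex. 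Your approach gives yourself only a bare shortest odd hole, which provides no such leverage.

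Two concrete problems follow. First, your claimed attachment classification is wrong: a vertex with exactly one neighbour on an odd hole of length at least $5$ already yields a fork (centre $c_1$, leaves $x$ and $c_{2m+1}$, path $c_1c_2c_3$), so that type cannot occur; conversely, on a $5$-hole a vertex adjacent to exactly four consecutive vertices induces no fork with the hole alone, and such vertices genuinely arise (they are the sets $T'$ and $Q'$ that occupy most of the paper's case analysis). So the ``global shape of $G$ near $C$'' you read off is not correct, and the subsequent claim that each hole vertex is trisimplicial does not follow --- indeed the paper's hard work is precisely in controlling these four-consecutive-neighbour vertices around a $5$-balloon. Second, your odd-hole-free case is not actually handled: showing that a $(\text{fork},\text{odd parachute})$-free graph with no odd hole is perfectly divisible is not a ``short analysis'' of a longest odd antihole (perfect divisibility of the antihole itself says nothing about the ambient graph); the clean way to dispose of it is to note that no odd hole implies no odd balloon and cite Wu--Xu, which is exactly the tool your outline omits. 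Likewise the ``residual wheel-like case'' at the end is asserted rather than argued. As written, the proposal identifies the right forbidden-subgraph interplay but does not constitute a proof, and its central structural claim fails.
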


As corollaries of Theorem \ref{forkgem+tri}, we have that

\begin{corollary}\label{MTHM1}
	Every nonperfectly divisible $\text{claw}$-free graph $G$ contains a trisimplicial vertex.
\end{corollary}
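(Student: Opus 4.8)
The plan is to obtain Corollary \ref{MTHM1} as an immediate consequence of Theorem \ref{forkgem+tri}. All the real work sits in that theorem; the corollary only requires the observation that the class of claw-free graphs is contained in the class of $(\text{fork}, \text{odd parachute})$-free graphs. Once this containment is established, a nonperfectly divisible claw-free graph $G$ is in particular a nonperfectly divisible $(\text{fork}, \text{odd parachute})$-free graph, so applying Theorem \ref{forkgem+tri} to $G$ produces a trisimplicial vertex.

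So the first step is to check that the fork contains an induced claw. Labelling the fork so that the center $c$ is adjacent to $a$, $b$, $d$ and the pendant vertex $e$ is adjacent only to $d$, the set $\{a,b,d\}$ is stable and each of $a,b,d$ is adjacent to $c$; hence $\{c,a,b,d\}$ induces a $K_{1,3}$. Therefore any graph containing a fork also contains a claw, i.e. every claw-free graph is fork-free.

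The second step is to check that every odd parachute contains an induced claw. Write the odd parachute as an odd hole $H=v_1v_2\cdots v_k v_1$ with $k\ge 5$, an apex $u$ complete to $V(H)$, and a vertex $v$ adjacent to $u$ and anticomplete to $V(H)$. Since $k\ge 5$, the hole $H$ contains two non-adjacent vertices, say $v_1$ and $v_3$; then $\{v_1,v_3,v\}$ is stable (as $v$ has no neighbour in $H$ and $v_1v_3\notin E$), and each of $v_1,v_3,v$ is adjacent to $u$, so $\{u,v_1,v_3,v\}$ induces a $K_{1,3}$. Hence every claw-free graph is odd-parachute-free.

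Combining the two steps, every claw-free graph is $(\text{fork},\text{odd parachute})$-free, and Corollary \ref{MTHM1} follows at once from Theorem \ref{forkgem+tri}. I do not expect any genuine obstacle here; the only point requiring care is to use the precise definition of the odd parachute — in particular that the hole has length at least five (so that the needed non-adjacent pair exists) and that $v$ is anticomplete to the hole — when exhibiting the induced claw.
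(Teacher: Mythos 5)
Your proposal is correct and matches the paper's (implicit) argument exactly: the corollary is stated as an immediate consequence of Theorem \ref{forkgem+tri}, justified by the containment of claw-free graphs in the class of $(\text{fork},\text{odd parachute})$-free graphs, which you verify by exhibiting an induced claw in the fork and in every odd parachute. Both of your verifications are sound, including the use of the hole length being at least five to find the non-adjacent pair $v_1,v_3$.
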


and improve Theorems \ref{claww2} and \ref{forkgem} by showing that

\begin{corollary}\label{MTHM}
	Every $(\text{fork}, \text{odd parachute})$-free graph $G$ satisfies $\chi(G)\leq \binom{\omega(G)+1}{2}$.
\end{corollary}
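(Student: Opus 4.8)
The plan is to deduce Corollary~\ref{MTHM} from Theorem~\ref{forkgem+tri} by induction on $|V(G)|$. If $G$ is perfectly divisible, then $\chi(G)\le\binom{\omega(G)+1}{2}$ as already recorded in the introduction; for completeness, choosing a partition $V(G)=A\cup B$ with $G[A]$ perfect and $\omega(G[B])<\omega(G)$ we get $\chi(G[A])=\omega(G[A])\le\omega(G)$, and since $G[B]$ is again perfectly divisible with $\omega(G[B])\le\omega(G)-1$, a short induction on the clique number gives $\chi(G[B])\le\binom{\omega(G)}{2}$; colouring $A$ and $B$ with disjoint palettes yields $\chi(G)\le\omega(G)+\binom{\omega(G)}{2}=\binom{\omega(G)+1}{2}$. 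So we may assume $G$ is nonperfectly divisible.

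By Theorem~\ref{forkgem+tri}, $G$ then has a trisimplicial vertex $v$; write $N(v)=Q_1\cup Q_2\cup Q_3$ with each $Q_i$ a clique. As $Q_i\cup\{v\}$ is a clique we have $|Q_i|\le\omega(G)-1$, so $|N(v)|\le 3(\omega(G)-1)$. The graph $G-v$ is again $(\text{fork},\text{odd parachute})$-free and has fewer vertices, so by the induction hypothesis it has a proper colouring $\varphi$ using at most $\binom{\omega(G-v)+1}{2}\le\binom{\omega(G)+1}{2}$ colours; enlarge the palette, if necessary, to a set $C$ with $|C|=\binom{\omega(G)+1}{2}$. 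At most $|N(v)|\le 3(\omega(G)-1)$ colours of $C$ occur on $N(v)$, so whenever $3(\omega(G)-1)<\binom{\omega(G)+1}{2}$ we may assign $v$ a colour of $C$ missing from $N(v)$, obtaining a proper $\binom{\omega(G)+1}{2}$-colouring of $G$. Since $\binom{\omega+1}{2}-3(\omega-1)=\tfrac12(\omega-2)(\omega-3)$, this inequality holds precisely when $\omega\notin\{2,3\}$; hence every $G$ in the class with $\omega(G)\ge 4$ is handled (and $\omega(G)\le 1$ is immediate).

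The remaining cases $\omega(G)\in\{2,3\}$, where $3(\omega-1)$ meets $\binom{\omega+1}{2}$ exactly and the crude extension can fail, are the delicate point and need separate structural arguments. When $\omega(G)=2$ the graph is triangle-free and one must show $\chi(G)\le 3$; this follows either from a direct argument for triangle-free $(\text{fork},\text{odd parachute})$-free graphs or by checking that such graphs are perfectly divisible, which returns us to the first case. When $\omega(G)=3$ the graph is $K_4$-free, and the extra leverage is that a triangle inside $N(v)$ together with $v$ would form a $K_4$, so $G[N(v)]$ is triangle-free; analysing this configuration under $(\text{fork},\text{odd parachute})$-freeness one can either choose $\varphi$ so that some colour of $C$ is absent from $N(v)$, or produce the desired perfect/low-clique partition directly, giving $\chi(G)\le 6=\binom{4}{2}$. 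The substantive content of the result is of course absorbed into Theorem~\ref{forkgem+tri}, which we are assuming; within the present derivation the main obstacle is precisely this tightness at $\omega(G)\in\{2,3\}$, where the trisimplicial bound $|N(v)|\le 3(\omega(G)-1)$ coincides with the target $\binom{\omega(G)+1}{2}$ and a dedicated treatment of small clique number is unavoidable.
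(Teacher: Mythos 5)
Your overall strategy is exactly the paper's: handle the perfectly divisible case by induction on the clique number, and otherwise use Theorem~\ref{forkgem+tri} to find a trisimplicial vertex $v$, colour $G-v$ inductively, and give $v$ a colour missing from its neighbourhood because $|N(v)|\le 3(\omega(G)-1)<\binom{\omega(G)+1}{2}$ once $\omega(G)\ge 4$. Your computation $\binom{\omega+1}{2}-3(\omega-1)=\tfrac12(\omega-2)(\omega-3)$ correctly isolates $\omega(G)\in\{2,3\}$ as the only problematic values, and everything up to that point is sound.

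The gap is that you never actually close those two cases. You write that the claim ``follows either from a direct argument \dots or by checking that such graphs are perfectly divisible,'' and that for $\omega(G)=3$ ``one can either choose $\varphi$ so that some colour of $C$ is absent from $N(v)$, or produce the desired perfect/low-clique partition directly'' --- but no such argument is supplied, and none of these assertions is obvious. In particular, it is not clear a priori that a triangle-free $(\text{fork},\text{odd parachute})$-free graph is $3$-colourable or perfectly divisible, nor that the $K_4$-free case admits a $6$-colouring by rechoosing $\varphi$; these statements carry real content and cannot be waved through. The paper disposes of $\omega(G)\le 3$ by citing a result of Randerath's thesis \cite{BR1993} giving $\chi(G)\le 4$ for such graphs (which covers $\omega(G)=3$ since $\binom{4}{2}=6$, and the Vizing-type bound $\chi\le\omega+1$ from the same source covers $\omega(G)=2$). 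Without either that external input or a worked-out structural argument, your proof is incomplete precisely at the values of $\omega$ you yourself identified as the delicate point.
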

Let $G$ and $H$ be two vertex disjoint graphs. The {\em union} $G\cup H$ is the
graph with $V(G\cup H) =V(G)\cup (H)$ and $E(G\cup H) = E(G)\cup E(H)$.
The {\em join} $G +H$ is the graph with $V(G+ H)= V(G) + V(H)$ and $E(G + H) = E(G)\cup E(H)\cup \{xy: x\in V (G),y\in V(H)\}$.
A {\em dart} is the graph $P_1+(P_1\cup P_3)$, see Figure \ref{Figure eight} for a depiction.
The result in Corollary \ref{MTHM} also improves a result in \cite{TKJK2022}, where it was showed that $\chi(G)\leq\omega(G)^2$ if $G$ is $(\text{fork, dart})$-free.
We complete the proof of Theorem \ref{forkgem+tri} and Corollary \ref{MTHM} in Section \ref{secproof}.
In the rest of this section, we introduce additional notation and terminology.

Let $G$ be a graph and let $X,Y$ be disjoint subsets of $V(G)$.
For a vertex $v\in V(G)\setminus X$,
we say that $v$ is {\em complete} to $X$ in $G$ if $v$ is adjacent to every vertex in $X$; $v$ is {\em anticomplete} to $X$ in $G$ if $v$ is nonadjacent to every vertex in $X$.
We say that $X$ is {\em complete} (resp., {\em anticomplete}) to $Y$ if all vertices in $X$ are
complete (resp., anticomplete) to $Y$.
For any $v\in V(G)$, let $N_G(v)$ denote the set of all neighbors of $v$ in $G$, $d_G(v):=|N(v)|$, $N_G[v] := N_G(v)\cup \{v\}$, and let $M_G(v) := V(G)\setminus N_G[v]$.
The neighborhood $N_G(X)$ of $X$ is the set $\{v\in V(G)\backslash X:v$ is adjacent to a vertex of $X\}$.
Let $N_G[X]:=N_G(X)\cup X$ and $M_G(X) := V(G)\setminus N_G[X]$.
When there is no confusion, subscripts will be omitted.
For $u,v\in V(G)$, we simply write $u\sim v$ if $uv \in E(G)$, and write $u\nsim v$ if $uv \notin E(G)$.
For a vertex $u\in V(G)$, we will usually write $G\setminus u$ instead of $G[V(G)\setminus\{u\}]$.
\section{The main proof}\label{secproof}

The aim of this section is to prove Theorem \ref{forkgem+tri} and Corollary \ref{MTHM}.
Before our proof we list some useful results needed in later proofs. 
Recall that a hole in a graph is
an induced cycle of length at least 4.
A hole is {\em odd} if it is of odd length.
The following result, proved by Wu and Xu \cite{DW2023}, is used several times in the sequel.

\begin{lemma}[Wu-Xu \cite{DW2023}]\label{CLMA1}
	Let $G$ be a $fork$-free graph, and $C:=v_1v_2\ldots v_nv_1$ be an odd hole contained in $G$. If there exist two adjacent vertices $u$ and $v$ in $V(G)\setminus V(C)$ such that $u$ is not anticomplete to $V(C)$ but $v$ is, then $N(u)\cap V(C)=\{v_j,v_{j+1}\}$, for some $j\in [n]$,  or $N(u)\cap V(C)=V(C)$.
\end{lemma}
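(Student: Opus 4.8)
The plan is to work entirely with the set $S:=N(u)\cap V(C)$, thinking of its elements as indices read modulo $n$, and to use the vertex $v$ as a fixed ``pendant handle'': since $v\sim u$ and $v$ is anticomplete to $V(C)$, the vertex $v$ is automatically nonadjacent to every cycle vertex we might place in a configuration. The whole argument rests on two induced-fork gadgets that fork-freeness forbids. \emph{Gadget~I} (centre $u$): if $p,q\in S$ are nonadjacent on $C$ and some $r\in\{p-1,p+1\}$ lies outside $S$ and is nonadjacent to $v_q$, then $\{u,v_p,v_q,v,v_r\}$ induces a fork with centre $u$, leaves $v_p,v_q,v$, and pendant $v_r$ on $v_p$. \emph{Gadget~II} (centre $v_p$): if $p\in S$ but $p-1,p+1\notin S$, then $\{v_p,v_{p-1},v_{p+1},u,v\}$ induces a fork with centre $v_p$, leaves $v_{p-1},v_{p+1},u$, and pendant $v$ on $u$. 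In each case the only thing to check is that no spurious edge exists; this follows because $C$ is induced (so edges among cycle vertices are exactly the consecutive pairs) and $v$ is isolated from $C$.

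First I would record the consequences of these gadgets. Gadget~II shows $S$ has no isolated index: every $p\in S$ satisfies $p-1\in S$ or $p+1\in S$. Hence $S$ is a disjoint union of maximal arcs (``runs''), each of length at least $2$; in particular $|S|\ge 2$. Since $S\neq\varnothing$, it remains to prove that if $S\neq V(C)$ then $S$ is a single run of length exactly $2$, which is precisely the conclusion $N(u)\cap V(C)=\{v_j,v_{j+1}\}$.

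Assume $S\neq V(C)$. To rule out two distinct runs, let $R_1$ be a run with right endpoint $a+1$ (so $a,a+1\in S$ and $a+2\notin S$) and let $R_2$ be the next run, which by the previous paragraph contains two vertices $c,c+1\in S$ with $c\ge a+3$. I would apply Gadget~I with $p=a+1$, $r=a+2$, $q=c+1$: the vertices $v_{a+1}$ and $v_{c+1}$ are nonadjacent because two consecutive vertices both in $S$ would lie in the same run, and $v_{a+2}$ is nonadjacent to $v_{c+1}$ because the only cycle-neighbours of $v_{a+2}$ are $v_{a+1}$ and $v_{a+3}$, neither of which equals $v_{c+1}$ (as $c+1\ge a+4$). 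This produces a fork, so there is exactly one run. To show that single run $[a,b]$ has length $2$: since $S\neq V(C)$ we have $a-1,b+1\notin S$, and if $b\ge a+2$ I would apply Gadget~I with $p=a$, $r=a-1$, $q=a+2$, where $v_a\nsim v_{a+2}$ (distance $2$) and $v_{a-1}\nsim v_{a+2}$ (cyclic distance $\ge 2$, valid for all $n\ge 5$). This again yields a fork, forcing $b=a+1$, i.e.\ $S=\{v_a,v_{a+1}\}$.

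The step I expect to be the main obstacle is eliminating the two-run case when the two runs are separated by a single gap vertex (gap length $1$): there the natural choice $q=c$ (the near endpoint of $R_2$) fails, because the pendant $v_{a+2}$ is then adjacent to $v_c$ and the putative fork acquires an extra edge. The resolution above is to choose instead the \emph{far} endpoint $q=c+1$, which is guaranteed nonadjacent to both $v_{a+1}$ and $v_{a+2}$ purely from the facts that $v_{a+1},v_{c+1}$ lie in different runs and that $c+1\neq a+3$. A secondary point requiring care is the verification of all nonadjacencies in the smallest odd hole $n=5$ and in the near-full case $|S|=n-1$; in each instance the relevant cyclic distances are at least $2$, so the gadgets apply. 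I do not expect to need the oddness of $C$ beyond $n\ge 5$; the argument uses only that $C$ is an induced cycle of length at least five.
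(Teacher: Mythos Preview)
The paper does not give its own proof of this lemma: it is quoted verbatim from Wu and Xu \cite{DW2023} and used as a black box throughout Section~\ref{secproof}. So there is no in-paper argument to compare your proposal against.

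That said, your argument is correct and self-contained. Gadget~II forces every element of $S$ to have a consecutive neighbour in $S$, so $S$ decomposes into runs of length at least two; Gadget~I, applied with the far endpoint $q=c+1$ of the next run, cleanly kills the multi-run case even when the gap has length one, and applied with $p=a$, $q=a+2$, $r=a-1$ it kills runs of length three or more. The cyclic-distance checks you flag (in particular $v_{a-1}\nsim v_{a+2}$ and $v_{a+2}\nsim v_{c+1}$) all go through for every $n\ge 5$, and your observation that only $n\ge 5$ (not oddness) is used is accurate. The one place to be a touch more explicit is the wraparound verification that $v_{a+1}\nsim v_{c+1}$ and $v_{a+2}\nsim v_{c+1}$: rather than the inequality $c+1\ge a+4$, the clean reason is that $c,c+1\in R_2$ while $a,a+1\in R_1$ and $a+2\notin S$, so modular coincidences like $c\equiv a-1$ or $c\equiv a+2$ are impossible.
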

	A vertex of a graph is {\em bisimplicial} if its neighborhood is the union of two cliques,
	and a vertex of a graph is {\em trisimplicial} if its neighbourhood is the union of three cliques.

	Let $B$ be an induced odd balloon in $G$, and $u\in V(B)$.
	We call $u$ be the {\em center} of $B$ if $d_B(u) = 3$ and $u$ has a neighbor of degree $1$ in $B$, and call $B$ a {\em minimum induced odd balloon} if $|V(B)|$ is minimum among all induced odd balloons of $G$. We are now ready to prove our main result.

\begin{proof}[\bf Proof of Theorem \ref{forkgem+tri}]
Let $G$ be a $(\text{fork}, \text{odd parachute})$-free graph that is nonperfectly divisible. In what follows, we will complete our proof by showing that $G$ contains a trisimplicial vertex.
		We prove this by contradiction.
		Suppose that $G$ contains no trisimplicial vertex.
		Since $G$ is nonperfectly divisible, $G$ must contain an odd balloon by Theorem \ref{ballon}.
		Let $B$ be a minimum induced odd balloon of $G$, and $u$ be the center of $B$.
		Let $C=v_1v_2\ldots v_nv_1$ be the odd hole of $B$ such that $n\geq 5$ and $n$ is odd, and $s$ be the vertex of degree $1$ of $B$.
		Without loss of generality, we may assume that $u$ is complete to $\{v_1,v_2\}$ and anticomplete to $\{v_3, v_4,\ldots, v_n\}$.  
\begin{claim}\label{vivj}
			Let $X$ denote the set of vertices which are complete to $\{v_i,v_{i+1}\}$ and anticomplete to $\{v_{i+2},v_{i+3}\}$ for some $i\in [n]$.
			Then $X$ is a clique.
		\end{claim}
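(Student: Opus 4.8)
The plan is to argue by contradiction. Suppose $X$ is not a clique; fix non-adjacent $x,y\in X$, say $x$ complete to $\{v_i,v_{i+1}\}$ and anticomplete to $\{v_{i+2},v_{i+3}\}$, and $y$ complete to $\{v_j,v_{j+1}\}$ and anticomplete to $\{v_{j+2},v_{j+3}\}$ (indices of $C$ read modulo $n$). Note that $u$ itself lies in $X$, with witness $i=1$, since $u$ is complete to $\{v_1,v_2\}$ and anticomplete to $\{v_3,\dots,v_n\}$; this vertex will serve as a convenient anchor.

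The clean case is when $x$ and $y$ admit a common witness, i.e.\ after relabelling $i=j$. Then $v_{i+1},v_{i+2},v_{i+3},x,y$ are five distinct vertices (here $n\ge 5$ is used), and reading off the adjacencies --- $x,y$ are complete to $v_{i+1}$ and anticomplete to $\{v_{i+2},v_{i+3}\}$, $x\nsim y$, and $v_{i+1}v_{i+2},v_{i+2}v_{i+3}$ are edges of $C$ while $v_{i+1}\nsim v_{i+3}$ --- so $\{v_{i+1},v_{i+2},v_{i+3},x,y\}$ induces a fork with centre $v_{i+1}$, arms $v_{i+2},x,y$, and pendant $v_{i+3}$ on the arm $v_{i+2}$, contradicting fork-freeness. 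In particular, taking $y=u$, this already forces every vertex of $X$ with witness $1$ to be adjacent to $u$.

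For the remaining case, $x$ and $y$ have no common witness. I would first sharpen the $C$-neighbourhoods: whenever $x$ has a neighbour anticomplete to $C$ --- the pendant vertex $s$ of $B$ being the prototype, and the absence of trisimplicial vertices guaranteeing that $s$ has further neighbours to exploit --- Lemma~\ref{CLMA1} forces $N(x)\cap V(C)=\{v_i,v_{i+1}\}$ (it cannot be $V(C)$ since $x\nsim v_{i+2}$), and the minimality of $B$ limits how long an arc of $C$ a vertex can see, since seeing a long odd arc produces a shorter odd hole, hence a smaller odd balloon. Having controlled these neighbourhoods, I would split into subcases according to the cyclic distance on $C$ between the edges $v_iv_{i+1}$ and $v_jv_{j+1}$: when these edges share a vertex or are consecutive, the adjacencies of $x,y$ to $u$ together with a few suitable vertices of $C$ should again yield an induced fork; when they are far apart, the plan is to reroute $C$ through $x$ and $y$ into an induced odd cycle and then either find a vertex complete to it, giving an odd parachute, or produce a fork.

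The step I expect to be the main obstacle is exactly this last subcase, with the two witness edges disjoint and non-consecutive: there is no induced fork visible locally (a single vertex of $X$ together with $C$ does not by itself create one), so one is forced either to lean on the global hypotheses --- the absence of trisimplicial vertices, the minimality of $B$ --- or to carry out a careful hole-rerouting argument; the book-keeping is delicate precisely because $N(x)\cap V(C)$ and $N(y)\cap V(C)$ need not be single edges (in a fork-free graph such a neighbourhood can be any subpath of $C$), which creates unwanted chords when one inserts $x$ or $y$ into $C$. Pinning these neighbourhoods down and then picking out the right forbidden configuration is where the real work lies.
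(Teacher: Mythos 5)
Your ``clean case'' is the entire content of the claim, and there you reproduce the paper's proof essentially verbatim: for two nonadjacent $x,y$ sharing the witness $i$, the set $\{v_{i+3},v_{i+2},v_{i+1},x,y\}$ induces a fork ($v_{i+1}$ is the centre with neighbours $x,y,v_{i+2}$, and $v_{i+3}$ is the pendant attached to $v_{i+2}$), using only that $C$ is an induced cycle of length at least $5$. The phrase ``for some $i\in[n]$'' in the statement is to be read with $i$ fixed, i.e.\ the claim asserts that each of the $n$ sets $X_i$ is a clique; every later invocation of the claim (for $T_1$, $Q_1$, $Q_2$, $Q_3$, $Q_4$, $Q_5$) applies it to a set of vertices all sharing the same witness edge of $C$, so nothing more is needed.

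The ``remaining case'' you agonise over is not only unnecessary but unprovable: the union $\bigcup_i X_i$ is genuinely not a clique in general, and the paper itself exhibits this later in the same proof. In its Case 1 it produces nonadjacent vertices $q_1,q_2$ with $N(q_1)\cap V(C)=\{v_5,v_1\}$ and $N(q_2)\cap V(C)=\{v_2,v_3\}$; the first lies in $X_5$ and the second in $X_2$, so together they refute the stronger reading (this is precisely why $Q'$ is only shown to be the \emph{union of two cliques} rather than a clique). Consequently the hole-rerouting and odd-parachute machinery you sketch for distinct witness edges cannot be made to work, and the appeals to the minimality of $B$ and to the absence of trisimplicial vertices are not needed for this claim. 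Once you adopt the per-$i$ reading, your argument is already complete and coincides with the paper's one-line proof.
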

Otherwise, suppose   that $X$ has two nonadjacent vertices $x_1$ and $x_2$. Then $\{v_{i+3},v_{i+2},v_{i+1},x_1,x_2\}$ induces a fork, a contradiction. This proves Claim \ref{vivj}. 

Let $T$ be the set of vertices in $N(u)\setminus \{v_1,v_2\}$ which are nonadjacent to $s$. 
\begin{claim}\label{ALIGN3}
$T$ is complete to $\{v_1,v_2\}$.
\end{claim}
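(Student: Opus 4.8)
The plan is to argue by contradiction and in fact to produce an induced fork each time, using only that $G$ is fork-free together with the structure of the balloon $B$ (neither Lemma~\ref{CLMA1} nor the minimality of $B$ is needed). Suppose some $t\in T$ is not complete to $\{v_1,v_2\}$. Reversing the hole $C$ while keeping $u$ and $s$ fixed swaps the roles of $v_1$ and $v_2$ (and of $v_3$ and $v_n$, and so on), and all of our hypotheses are invariant under this relabelling, so we may assume $t\nsim v_1$. Observe first that $\{u,s,v_1,t\}$ induces a claw centered at $u$: indeed $u$ is adjacent to each of $s,v_1,t$ (by the definition of $T$, and since $s$ is the degree-$1$ vertex of $B$ while $u$ is its center), whereas $s,v_1,t$ are pairwise nonadjacent ($s\nsim v_1$ and $s\nsim t$ because $s$ has degree $1$ in the induced subgraph $B$ and by the definition of $T$, and $t\nsim v_1$ by assumption).

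Using this claw I would first rule out the ``long'' neighbours of $t$ on $C$. For any $k$ with $t\sim v_k$ and $v_k$ nonadjacent to each of $u,s,v_1$, the set $\{u,s,v_1,t,v_k\}$ induces a fork, with center $u$, leaves $s,v_1,t$, and the pendant edge $tv_k$ at the leaf $t$. Since $v_k$ is automatically nonadjacent to $s$, while $v_k\nsim u$ holds exactly when $k\notin\{1,2\}$ and $v_k\nsim v_1$ exactly when $k\notin\{2,n\}$, fork-freeness forces $t\nsim v_k$ for every $k\in\{3,4,\dots,n-1\}$; in particular $t\nsim v_3$ and $t\nsim v_4$, using $n\geq 5$. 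Next I claim $t\sim v_2$: if not, then $\{u,s,v_2,t\}$ likewise induces a claw centered at $u$, and since $v_3\sim v_2$ while $v_3$ is nonadjacent to each of $u,s,t$, the set $\{u,s,v_2,t,v_3\}$ induces a fork — a contradiction. Hence $t\sim v_2$.

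To finish, note that with $t\sim v_2$ and $t$ nonadjacent to each of $v_1,v_3,v_4$, the set $\{v_2,v_1,v_3,t\}$ induces a claw centered at $v_2$ (here $v_1\nsim v_3$ because $n\geq 5$, and $v_1\nsim t$, $v_3\nsim t$ by the previous step). Extending the leaf $v_3$ along the hole edge $v_3v_4$, and using $n\geq 5$ to verify that $v_4$ is nonadjacent to each of $v_1,v_2,t$, we find that $\{v_1,v_2,v_3,v_4,t\}$ induces a fork (with center $v_2$), the desired contradiction. Therefore every $t\in T$ is complete to $\{v_1,v_2\}$.

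The step that needs the most care is the second paragraph: one has to choose the right claw on which to hang hole-vertices — namely $\{u,s,v_1,t\}$, which works uniformly because $u$'s only neighbours on $C$ are $v_1$ and $v_2$ — and then check that each resulting five-vertex set really is an induced fork with no stray edges. This is slightly fiddly in the smallest case $n=5$, where $v_3$ and $v_4$ lie close to $v_1$ on $C$, but the same five-vertex sets still do the job there; the rest is routine fork detection.
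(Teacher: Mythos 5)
Your proof is correct, and it takes essentially the same approach as the paper's: a proof by contradiction that repeatedly exhibits induced forks built from the claw at $u$ on $\{u,s,v_i,t\}$ together with hole vertices, using only fork-freeness and the balloon structure. The paper organizes the deductions slightly differently (assuming $t\nsim v_2$, it forces $t\sim v_3$, $t\nsim v_n$, $t\sim v_1$, $t\nsim v_{n-1}$ and concludes with a fork on $\{t,v_1,v_2,v_{n-1},v_n\}$, likewise without invoking Lemma~\ref{CLMA1} or the minimality of $B$), but the method and level of the argument are the same.
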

Suppose to the contrary that there exists $t\in T$ is not complete to $\{v_1,v_2\}$. We may assume up to symmetry that $t\nsim v_2$. To forbid a fork on $\{s,t,u,v_2,v_3\}$, we have that $t\sim v_3$. If $t\sim v_n$, then $\{s,u,t,v_3,v_n\}$ induces a fork, a contradiction. So, $t\nsim v_n$. To forbid a fork on $\{s,t,u,v_1,v_n\}$, we have that $t\sim v_1$. If $t\sim v_{n-1}$, then $\{s,u,t,v_2,v_{n-1}\}$ induces a fork, a contradiction. So, $t\nsim v_{n-1}$. But now, $\{t,v_1,v_2,v_{n-1},v_n\}$ induces a fork, a contradiction. This proves Claim \ref{ALIGN3}.

Let $T_1=\{v\in T:N(v)\cap V(C)=\{v_1,v_2\}\}$, $T_2=\{v\in T:N(v)\cap V(C)=\{v_1,v_2,v_3\}\}$, and $T_3=\{v\in T:N(v)\cap V(C)=\{v_1,v_2,v_n\}\}$. By Claim \ref{vivj}, $T_1$ is a clique. Suppose that $T_2$ is not a clique. Let $t_3,t_4\in T_2$ be two nonadjacent vertices. Then $\{t_3,t_4,v_3,v_4,v_5\}$ induces a fork, a contradiction. Therefore, $T_2$ is a clique. Similarly, $T_3$ is a clique. 

\begin{claim}\label{CLM0}
$T_1\cup T_2\cup T_3$ is a clique.
\end{claim}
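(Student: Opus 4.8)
The plan is to exploit the structure shared by the three sets: every vertex of $T_1\cup T_2\cup T_3$ lies in $N(u)$ and is non-adjacent to $s$, while $u\sim s$. Since it was already shown that each of $T_1,T_2,T_3$ is a clique, it suffices to prove that $T_i$ is complete to $T_j$ whenever $i\neq j$. I would do this by turning any putative non-edge between two of the sets into an induced fork with centre $u$.

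Concretely, suppose first that $t_1\in T_1$, $t_2\in T_2$ and $t_1\nsim t_2$. Consider $\{u,s,t_1,t_2,v_3\}$. Here $u$ is adjacent to each of $s,t_1,t_2$ and non-adjacent to $v_3$ (since $u$ is anticomplete to $\{v_3,\dots,v_n\}$); $t_2\sim v_3$ because $N(t_2)\cap V(C)=\{v_1,v_2,v_3\}$; and all remaining pairs are non-adjacent, namely $t_1\nsim t_2$ by assumption, $t_1,t_2\nsim s$ because $t_1,t_2\in T$, $t_1\nsim v_3$ because $N(t_1)\cap V(C)=\{v_1,v_2\}$, and $s\nsim v_3$ because $s$ is anticomplete to $V(C)$. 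Hence $\{u,s,t_1,t_2,v_3\}$ induces a fork with centre $u$, a contradiction; so $T_1$ is complete to $T_2$. Replacing $v_3$ by $v_n$ throughout gives the identical contradiction when $t_1\in T_1$, $t_3\in T_3$ and $t_1\nsim t_3$, so $T_1$ is complete to $T_3$.

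For the last case, suppose $t_2\in T_2$, $t_3\in T_3$ and $t_2\nsim t_3$. I would again use $\{u,s,t_2,t_3,v_3\}$: the point is that $n\ge 5$ forces $v_3\notin\{v_1,v_2,v_n\}$, so $t_3\nsim v_3$ while $t_2\sim v_3$, and all other required non-adjacencies hold exactly as in the previous case; thus this set induces a fork with centre $u$, a contradiction, and $T_2$ is complete to $T_3$. Combined with the fact that each $T_i$ is a clique, this shows $T_1\cup T_2\cup T_3$ is a clique. (That the five vertices chosen in each case are pairwise distinct is routine: $v_3,v_n\notin N(u)$ while the $T$-vertices lie in $N(u)$, $s$ has no neighbour on $C$ while the $T$-vertices do, and $T_1,T_2,T_3$ are pairwise disjoint by definition.)

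I do not foresee a genuine obstacle here: the whole argument is three nearly identical five-vertex fork checks. The only thing to be careful about is the choice of the $C$-vertex that "distinguishes" the two $T$-sets in each case, together with the use of $n\ge 5$ to keep $v_3$ and $v_n$ distinct and both outside $N(t_3)$; once the right vertex is picked, every adjacency and non-adjacency is immediate from the definitions of $T,T_1,T_2,T_3$ and the basic structure of the balloon $B$.
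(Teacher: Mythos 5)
Your proof is correct and takes essentially the same approach as the paper's: since each $T_i$ is already a clique, any non-edge must be a cross-pair, and each cross-pair is killed by exhibiting a five-vertex induced fork (your checks of the adjacencies and distinctness are all accurate). The only difference is cosmetic: for a non-edge between $T_1$ and $T_2$ (or $T_3$) the paper uses the fork $\{t_1,t_2,v_1,v_n,v_{n-1}\}$ centred at $v_1$, whereas you use $\{u,s,t_1,t_2,v_3\}$ centred at $u$, which is equally valid and makes the three cases more uniform.
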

Suppose to the contrary that $t_1$ and $t_2$ are two nonadjacent vertices of $T_1\cup T_2\cup T_3$. Since $T_1$, $T_2$ and $T_3$ are all cliques by Claim \ref{ALIGN3}, we have that $t_1$ and $t_2$ belong to different elements of $\{T_1, T_2, T_3\}$. If $t_1\in T_2$ and $t_2 \in T_3$, then $\{t_2, t_1, u, s, v_n\}$ induces a fork, a contradiction. Otherwise, we assume by symmetry that $t_1 \in  T_1$ and $t_2 \in T_2$, then $\{t_1, t_2, v_1, v_n, v_{n-1}\}$ induces a fork, a contradiction again. This proves Claim \ref{CLM0}.

\begin{claim}\label{gem+Tclique}
$T$ is a clique.
\end{claim}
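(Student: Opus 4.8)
The plan is to argue by contradiction: suppose $t,t'\in T$ (both different from $s$) with $t\nsim t'$, and aim to produce a fork, a smaller induced odd balloon, or --- in one stubborn case --- a trisimplicial vertex.

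The first step is to show that $t$ and $t'$ have the same neighbourhood on $C$. Since $s,t,t'\in N(u)$ are pairwise non-adjacent ($t\nsim t'$ by assumption and $s\nsim t,t'$ by the definition of $T$), the set $\{u,s,t,t'\}$ induces a claw centred at $u$. For each $i\in\{3,\dots,n\}$ the vertex $v_i$ is non-adjacent to both $u$ and $s$, so if $v_i$ were adjacent to exactly one of $t,t'$ then $\{u,s,t,t',v_i\}$ would induce a fork. Hence $N(t)$ and $N(t')$ agree on $\{v_3,\dots,v_n\}$, and since both are complete to $\{v_1,v_2\}$ by Claim~\ref{ALIGN3}, $S:=N(t)\cap V(C)=N(t')\cap V(C)$. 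A second fork argument --- if $v_i,v_{i'}$ with $i,i'\in\{3,\dots,n\}$ are non-adjacent and both adjacent to $t$, then $\{s,u,t,v_i,v_{i'}\}$ induces a fork --- shows that $S\cap\{v_3,\dots,v_n\}$ is a clique of the path $G[\{v_3,\dots,v_n\}]$; combined with $\{v_1,v_2\}\subseteq S$, this forces $S$ to be $\{v_1,v_2\}$, or $\{v_1,v_2,v_j\}$ for some $3\le j\le n$, or $\{v_1,v_2,v_j,v_{j+1}\}$ for some $3\le j\le n-1$.

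I would then dispatch these possibilities. If $S\in\{\{v_1,v_2\},\{v_1,v_2,v_3\},\{v_1,v_2,v_n\}\}$ then $t$ and $t'$ lie in a common one of $T_1,T_2,T_3$, contradicting Claim~\ref{CLM0}. If $S=\{v_1,v_2,v_j\}$ with $4\le j\le n-1$, then $\{u,t,v_{j-1},v_j,v_{j+1}\}$ induces a fork with centre $v_j$. If $S=\{v_1,v_2,v_j,v_{j+1}\}$ with $4\le j\le n-2$, then $tv_2v_3\cdots v_jt$ and $tv_{j+1}v_{j+2}\cdots v_nv_1t$ are induced holes of lengths $j$ and $n-j+2$, each at most $n-2$ and summing to the odd number $n+2$; hence whichever of them is odd --- call it $H$ --- is an odd hole of length smaller than $n$, and $V(H)\cup\{u,s\}$ then induces an odd balloon with centre $u$ (which is adjacent to exactly the two consecutive vertices $t,v_2$, respectively $t,v_1$, of $H$) and pendant $s$, having fewer than $|V(B)|$ vertices and so contradicting the minimality of $B$. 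Finally, if $S=\{v_1,v_2,v_3,v_4\}$ --- the case $S=\{v_1,v_2,v_{n-1},v_n\}$ being symmetric to it --- and $n\ge7$, then $t,t'\nsim v_5,v_6$ and $v_4\nsim v_6$, so $\{t,t',v_4,v_5,v_6\}$ induces a fork with centre $v_4$.

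The crux is the one remaining case: $n=5$ with $S=\{v_1,v_2,v_3,v_4\}$, so that $t$ and $t'$ are both adjacent to the four consecutive hole vertices $v_1,v_2,v_3,v_4$ and to $u$, and non-adjacent to $v_5$, to $s$, and to each other. Here the route via a smaller odd balloon is unavailable, since $7=|V(B)|$ is already the least possible order of an odd balloon; and, worse, a direct check shows that the $7$-vertex graph $G[\{u,s,v_1,\dots,v_5,t,t'\}]$ is itself (fork, odd parachute)-free, so no purely local forbidden-subgraph argument can close it. I expect this case to be settled by invoking the global hypothesis that $G$ has no trisimplicial vertex --- for instance by analysing $N(u)$ (which carries the diamond on $\{v_1,v_2,t,t'\}$ together with the isolated vertex $s$), or $N(v_3)$ or $N(v_4)$ (which contain an induced $C_4$ on $\{v_2,v_4,t,t'\}$, respectively a $P_3$ on $\{v_3,t,t'\}$, together with further neighbours these vertices are forced to have) --- or else by a sharper use of the minimality of $B$. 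This is the step that I expect to require the most care.
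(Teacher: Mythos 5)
Your reduction to the final case is correct and in places cleaner than the paper's. The observation that $\{u,s,t,t'\}$ induces a claw, so every $v_i$ with $3\le i\le n$ (being nonadjacent to both $u$ and $s$) must be adjacent to both or neither of $t,t'$, pins down a common trace $S=N(t)\cap V(C)=N(t')\cap V(C)$ in one stroke; the paper reaches essentially the same conclusion by a longer route through $T'=T\setminus(T_1\cup T_2\cup T_3)$ and a smallest-index argument. Your fork constructions for $S=\{v_1,v_2,v_j\}$ with $4\le j\le n-1$ and for $n\ge 7$ with $S=\{v_1,v_2,v_3,v_4\}$ check out, and the smaller-odd-balloon argument for $S=\{v_1,v_2,v_j,v_{j+1}\}$ with $4\le j\le n-2$ is sound (the two cycles through $t$ have lengths $j$ and $n-j+2$, both at least $4$ and at most $n-2$, exactly one odd, and $u,s$ attach to the odd one as a balloon).

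However, the proof is not complete: the case $n=5$ with $N(t)\cap V(C)=N(t')\cap V(C)=\{v_1,v_2,v_3,v_4\}$ is exactly where the real work lies, and you have only sketched a direction ("analyse $N(u)$, $N(v_3)$ or $N(v_4)$, or use minimality more sharply") rather than an argument. You are right that the nine-vertex configuration $G[\{u,s,v_1,\dots,v_5,t,t'\}]$ is itself (fork, odd parachute)-free, so no local argument closes it and the no-trisimplicial-vertex hypothesis must enter; but this case occupies roughly two thirds of the paper's proof of the claim and proceeds through vertices you do not consider. Concretely, the paper first applies the hypothesis to $v_5$: it shows $s$ is anticomplete to $N(v_5)$, that the non-neighbours of $u$ in $N(v_5)\setminus\{v_1,v_4\}$ form a clique whose union with $\{v_1\}$ and $\{v_4\}$ gives two cliques, and hence extracts a second nonadjacent pair $r_3,r_4\in N(v_5)\cap N(u)$ with trace $\{v_1,v_2,v_4,v_5\}$ on $C$; iterating the same analysis yields $N(s)\cap N(\{v_3,v_4,v_5\})=\emptyset$. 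It then applies the hypothesis to $s$: the $C$-anticomplete part $V_s$ of $N(s)$ is a clique, the rest of $N(s)$ is complete to $\{v_1,v_2\}$, so non-trisimpliciality of $s$ produces nonadjacent $s_3,s_4\in N(s)\setminus V_s$ and the fork $\{v_4,v_3,v_2,s_3,s_4\}$. None of this is routine, so as it stands the proposal has a genuine gap at the decisive step.
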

Suppose to the contrary that $t_1,t_2\in T$ are two nonadjacent vertices. Then $\{t_1,t_2\}$ is complete to $\{v_1,v_2\}$ by Claim \ref{ALIGN3}. Let $T^\prime=T\setminus(T_1\cup T_2\cup T_3)$. Since $T_1\cup T_2\cup T_3$ is a clique by Claim \ref{CLM0}, we have that $\{t_1,t_2\}\cap T^\prime\neq \emptyset$. Without loss of generality, we may assume that $t_1\in T^\prime$. If $t_1$ is complete to $\{v_3,v_n\}$, then $\{s,u,t_1,v_3,v_n\}$ induces a fork, a contradiction. So, $t_1$ is not complete to $\{v_3,v_n\}$. It follows that  $N(t_1)\cap \{v_4,\ldots,v_{n-1}\}\neq \emptyset$. Let $j\in \{4,\ldots, n-1\}$ be the smallest integer such that $t_1\sim v_j$. 
			
Suppose that $t_1$ is anticomplete to $\{v_3,v_n\}$. If $B$ is a $5$-balloon, then $v_j=v_4$. This implies that $\{u,t_1,v_4,v_5,v_3\}$ induces a fork, a contradiction. So, $n\geq 7$. To forbid a fork on $\{u,t_1,v_j,v_{j+1},v_{j-1}\}$, we have that $t_1\sim v_{j+1}$. Furthermore, if there exists a vertex $v_i\in \{v_{j+2},\ldots,v_{n-1}\}$ such that $t_1\sim v_i$, then $\{s,u,t_1,v_i,v_{j}\}$ induces a fork, a contradiction. Thus, $N(t_1)\cap \{v_3,\ldots, v_n\}=\{v_j,v_{j+1}\}$, which implies that 
\begin{flalign*}
\text{either} ~\{v_{j+1},\ldots ,v_n,v_1,t_1,u,s\}~ \text{or} ~\{v_2,\ldots,v_j,t_1,u,s\} 
\end{flalign*}
induces a smaller odd balloon as $n\geq 7$, a contradiction.
Therefore,
\begin{flalign}\label{x1}
N(t_1)\cap \{v_3,v_n\}\neq \emptyset. 
\end{flalign}
Note that $t_1$ is not complete to $\{v_3,v_n\}$. So, we may assume up to symmetry that $t_1\sim v_3$ and $t_1\nsim v_n$. We now show that 
\begin{flalign}\label{x2}
N(t_1)\cap V(C)=\{v_1,v_2,v_3,v_4\}. 
\end{flalign}
If $n=5$, then there is nothing to prove. So, we may assume that $n\geq 7$. If there exists a vertex $v_i\in \{v_{5},\ldots,v_{n-1}\}$ such that $t_1\sim v_i$, then $\{s,u,t_1,v_i,v_3\}$ induces a fork, a contradiction. This implies that $v_j=v_4$ and hence $N(t_1)\cap V(C)=\{v_1,v_2,v_3,v_4\}$. This proves (\ref{x2}).
			
If $t_2\in T_1\cup T_2$, then $\{v_4,t_1,u,s,t_2\}$ induces a fork, a contradiction. If $t_2\sim v_n$, then $\{v_n,t_2,u,s,t_1\}$ induces a fork, a contradiction.

Thus, $t_2\nsim v_n$ and hence $t_2\in T^\prime$. By a similar argument as $t_1$, it follows from (\ref{x1}) and (\ref{x2}) that $N(t_2)\cap V(C)=\{v_1,v_2,v_3,v_4\}$. If $n\geq 7$, then $\{t_1,t_2,v_4,v_5,v_6\}$ induces a fork, a contradiction. Therefore, $B$ is a $5$-balloon. 

By our assumption, $v_5$ is not a trisimplicial vertex. This implies that $N(v_5)\setminus \{v_1,v_4\}\neq \emptyset$. Let $r\in N(v_5)\setminus \{v_1,v_4\}$ be any vertex. 
			
Suppose that $r\sim s$. By Lemma \ref{CLMA1} and that $G$ is odd-parachute-free, we have that $N(r)\cap V(C)\in \{\{v_4,v_5\},\{v_5,v_1\}\}$. If $N(r)\cap \{t_1,t_2\}=\emptyset$, then $r\sim u$ as $\{u,s,r,t_1,t_2\}$ cannot induces a fork. This implies that $\{u,t_1,t_2,r,v_5\}$ induces a fork, a contradiction. Therefore, $N(r)\cap \{t_1,t_2\}\neq\emptyset$. By symmetry, we may assume that $r\sim t_1$. But this implies that $\{v_3,t_1,r,v_5,s\}$ induces a fork, a contradiction. Therefore, $r\nsim s$. By the arbitrariness of $r$, we have that 
\begin{flalign}\label{x3}
s ~\text{is anticomplete to}~ N(v_5). 
\end{flalign}
Let $T^u_5$ be the set of vertices in $N(v_5)\setminus \{v_1,v_4\}$ which are nonadjacent to $u$. Suppose that $T^u_5$ has two nonadjacent vertices $r_1$ and $r_2$. Note that $\{r_1,r_2\}$ is anticomplete to $s$ by (\ref{x3}). If $\{r_1,r_2\}$ is complete to $v_1$, then $\{v_1,r_1,r_2,u,s\}$ induces a fork, a contradiction. If $\{r_1,r_2\}$ is anticomplete to $v_1$, then $\{v_5,v_1,u,r_1,r_2\}$ induces a fork, a contradiction again. Therefore, we may assume up to symmetry that $r_1\nsim v_1$ and $r_2\sim v_1$. To forbid a fork on $\{r_1,v_5,v_1,t_1,t_2\}$, we have that $N(r_1)\cap  \{t_1,t_2\}\neq \emptyset$. Without loss of generality, we may assume that $r_1\sim t_1$. To forbid a fork on $\{r_1,u,s,t_1,t_2\}$, we have that $r_1\sim t_2$. Furthermore, to forbid a fork on $\{r_1,v_5,r_2,t_1,t_2\}$, we have that $N(r_2)\cap \{t_1,t_2\}\neq \emptyset$. By symmetry, we may assume that $t_1\sim r_2$. But now, $\{s,u,t_1,r_1,r_2\}$ induces a fork, a contradiction. This proves that $T^u_5$ is a clique.
			
Let $r'$ be any vertex in $T^u_5$. To forbid a fork on $\{r',v_5,v_4,v_1,u\}$, we have that either $r'\sim v_4$ or $r'\sim v_1$. Hence, $T_u^5\cup \{v_1,v_4\}$ is the union of two cliques.
Since $v_5$ is not a trisimplicial vertex, we have that $N(v_5)\setminus (T^u_5\cup \{v_1,v_4\})\neq \emptyset$. Let $r''$ be any vertex in $N(v_5)\setminus (T^u_5\cup \{v_1,v_4\})$. Then $r''\sim u$. If $r''\sim v_3$, then $\{s,u,r'',v_3,v_5\}$ induces a fork, a contradiction. By the arbitrariness of $r''$, all vertices in $N(v_5)\setminus (T^u_5\cup \{v_1,v_4\})$ are nonadjacent to $v_3$.

Since $v_5$ is not a trisimplicial vertex, we have that $N(v_5)\setminus (T^u_5\cup \{v_1,v_4\})$ is not a clique. Therefore, there exist two nonadjacent vertices $ r_3,r_4$ in $N(v_5)\setminus (T^u_5\cup \{v_1,v_4\})$. Note that $T=T_1\cup T_2\cup T_3\cup T'$ and $u$ is complete to $\{r_3,r_4\}$. Since $\{r_3,r_4\}$ is complete to $v_5$ and anticomplete to $s$ by (\ref{x3}), we have that $\{r_3,r_4\}\subseteq T'$. Then, by a similar argument as on $\{t_1,t_2\}$, we have that $N(r_3)\cap V(C)=N(r_4)\cap V(C)=\{v_1,v_2,v_4,v_5\}$. Again, by a similar argument as on $N(v_5)$, we can prove that $N(s)\cap N(v_3)=\emptyset$. It follows from Lemma \ref{CLMA1} that $N(s)\cap N(v_4)=\emptyset$. Therefore, we have that
\begin{flalign}\label{sv345}
\text{$N(s)\cap N(\{v_3,v_4,v_5\})=\emptyset$.}
\end{flalign}
Let $V_s$ denote the set of vertices in $N(s)$ which are anticomplete to $V(C)$. 
			
Suppose that there exists a vertex $s^\prime$ of $V_s$ such that $s^\prime\nsim u$. Then to forbid a fork on $\{u,s,t_1,t_2,s^\prime\}$, we have that $N(s')\cap \{t_1,t_2\}\neq \emptyset$. We may assume up to symmetry that $s^\prime \sim t_1$. But this implies that $\{s,s^\prime ,t_1,v_1,v_3\}$ induces a fork, a contradiction. By the arbitrariness of $s^\prime$, all vertices in $V_s$ is adjacent to $u$. Now, if there are two nonadjacent vertices $s_1,s_2$ in $V_s$, then $\{u,s_1,s_2,v_2,v_3\}$ induces a fork, a contradiction.
Therefore, 
\begin{flalign}\label{ALGNx0}
\text{$V_s$ is a clique.}
\end{flalign}
By the definition of $V_s$, it follows from Lemma \ref{CLMA1} and (\ref{sv345}) that 
\begin{flalign}\label{ALGN4}
\text{$N(s)\setminus V_s$ is complete to $\{v_1,v_2\}$.}
\end{flalign}
By our assumption, $s$ is not a trisimplicial vertex, it follows from (\ref{ALGNx0}) and (\ref{ALGN4}) that there are two nonadjacent vertices $s_3, s_4$ in $N(s)\setminus V_s$. But now we see that $\{v_4,v_3,v_2,s_3,s_4\}$ induces a fork, a contradiction. Therefore, $T$ is a clique. This proves Claim \ref{gem+Tclique}.

Let $D$ denote the set of vertices in $N(u)\setminus \{v_1,v_2\}$ which are anticomplete to $V(C)$. Clearly, $s\in D$. If $D$ has two nonadjacent vertices $d_1$ and $d_2$, then $\{d_1,d_2,u,v_2,v_3\}$ induces a fork, a contradiction. Therefore, 
\begin{flalign}\label{ALIGN2}
\text{$D$ is a clique.}
\end{flalign}
Let $T^+$ be the set of vertices in $N(u)\setminus (D\cup \{v_1,v_2\})$ which are not complete to $D$. We prove that
\begin{flalign}\label{T+clique}
\text{$T^+$ is a clique.}
\end{flalign}
Since each vertex $t^+$ of $T^+$ has a nonneighbor in $D$, say $d^+$, by replacing $s$ with $d^+$ in Claim \ref{ALIGN3}, we have that $t^+$ is complete to $\{v_1,v_2\}$. That is, $T^+$ is complete to $\{v_1,v_2\}$.

Suppose to the contrary that $T^+$ is not a clique. Let $t_3,t_4$ be two nonadjacent vertices in $T^+$. If $D$ has a vertex, say $d$, that is nonadjacent to both $t_3$ and $t_4$, then by replacing $s$ with $d$ in Claim \ref{gem+Tclique}, we have that $t_3\sim t_4$, a contradiction. Therefore, by the definition of $T^+$, there exist $d_3,d_4\in D$ such that $t_3\sim d_3, t_3\nsim d_4, t_4\sim d_4$ and $t_4\nsim d_3$. Furthermore, by replacing $s$ with $d_3$ in Claim \ref{ALIGN3}, we have that $t_4$ is complete to $\{v_1,v_2\}$. Similarly, $t_3$ is complete to $\{v_1,v_2\}$. By Lemma \ref{CLMA1} and that $G$ is odd-parachute-free, we have that $N(\{t_3,t_4\})\cap V(C)=\{v_1,v_2\}$. However, it follows that $\{v_4,v_3,v_2,t_3,t_4\}$ induces a fork, a contradiction. This proves (\ref{T+clique}).
		
Let $Q$ be the set of vertices in $N(u)\setminus(\{v_1,v_2\}\cup D)$ which are complete to $D$. Then, $N(u)=\{v_1,v_2\}\cup D\cup T^+\cup Q$. Let $Q_1$ be the set of vertices in $Q$ which are complete to $\{v_1,v_2\}$. Clearly, $Q_1$ is complete to $s$. By Lemma \ref{CLMA1} and that $G$ is odd-parachute-free, we have that $N(Q_1)\cap V(C)=\{v_1,v_2\}$. It follows from Claim \ref{vivj} that
\begin{flalign}\label{Q1clique}
\text{$Q_1$ is a clique.}
\end{flalign}
\begin{claim}\label{CLaimtu}
			$T^+\cup Q_1$ is a clique.
		\end{claim}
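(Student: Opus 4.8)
The plan is to argue by contradiction. Since $T^+$ and $Q_1$ are each already cliques (by (\ref{T+clique}) and (\ref{Q1clique})), it suffices to rule out a nonedge with one endpoint in each set, so I would start by assuming there are $t\in T^+$ and $q\in Q_1$ with $t\nsim q$. From $t\in T^+$ I get a vertex $d\in D$ with $t\nsim d$, and since $q\in Q$ is complete to $D$ this gives $q\sim d$. The only facts I need, all established above, are: $t,q\in N(u)$ and both are complete to $\{v_1,v_2\}$; $N(q)\cap V(C)=\{v_1,v_2\}$; $d\in N(u)$ is anticomplete to $V(C)$; and $u$ is anticomplete to $\{v_3,\dots,v_n\}$.

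The core of the proof is to decide whether $t$ is adjacent to $v_3$ and to $v_n$, by exhibiting three forbidden forks. First I would show $t\sim v_n$: otherwise $\{d,q,v_1,t,v_n\}$ induces a fork centred at $v_1$ (with $t$ and $v_n$ the two leaves at $v_1$, and $d$ attached to $v_1$ through $q$) --- it is induced because the edges $v_1t$, $v_1v_n$, $v_1q$, $qd$ are all present while $q\nsim t$, $q\nsim v_n$ (here $v_n\notin\{v_1,v_2\}$ as $n\ge5$), and $d$ is nonadjacent to each of $v_1,t,v_n$; this contradicts fork-freeness. Symmetrically, replacing $v_1,v_n$ by $v_2,v_3$, the set $\{d,q,v_2,t,v_3\}$ forces $t\sim v_3$. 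But now $t\sim v_3$ and $t\sim v_n$, and I claim $\{d,u,t,v_3,v_n\}$ induces a fork centred at $t$ (leaves $v_3,v_n$, with $d$ attached to $t$ through $u$): indeed $ud$, $ut$, $tv_3$, $tv_n$ are edges, while $u$ and $d$ are both anticomplete to $\{v_3,v_n\}$, $d\nsim t$, and $v_3\nsim v_n$ since $n\ge5$ --- again a contradiction. Hence $T^+$ is complete to $Q_1$, which with (\ref{T+clique}) and (\ref{Q1clique}) yields that $T^+\cup Q_1$ is a clique.

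I do not expect a genuine obstacle. The only points requiring care are routine: verifying that the five vertices in each configuration are pairwise distinct (they are, since $t,q,d\notin V(C)$ --- because $u$ is anticomplete to $\{v_3,\dots,v_n\}$ and $d$ is anticomplete to $V(C)$ --- and since $Q\cap D=\emptyset$, $t\nsim d$, $t\nsim q$), and checking the few nonadjacencies $v_3\nsim v_n$, $v_n\notin\{v_1,v_2\}$, $v_3\notin\{v_1,v_2\}$, all immediate from $n\ge5$. The one observation that makes the argument short is that these three candidate forks are mutually contradictory, so there is no need to split on the length of $C$ or to determine $N(t)\cap V(C)$ completely.
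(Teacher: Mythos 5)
Your proof is correct and uses essentially the same three fork configurations as the paper: the paper first notes (via the forks on $\{s,q_1,v_2,v_3,t\}$ and its symmetric counterpart) that $t$ must be complete to $\{v_3,v_n\}$, and then derives the final contradiction from the fork on $\{t,v_n,v_3,u,s\}$, exactly matching your three steps with $s$ playing the role of your $d$ after a ``without loss of generality'' replacement. Your version is slightly more carefully organized (explicitly naming the nonneighbour $d\in D$ rather than assuming $t\nsim s$ up to relabelling), but the argument is the same.
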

Suppose to the contrary that there exist two nonadjacent vertices $t$ and $q_1$ in $T^+\cup Q_1$. Since both $T^+$ and $Q_1$ are cliques by (\ref{T+clique}) and (\ref{Q1clique}), we may assume that $t\in T^+$ and $q_1\in Q_1$. By the definition of $T^+$, without loss of generality, we may assume that $t\nsim s$.
If $t$ is complete to $\{v_n,v_3\}$, then $\{t,v_n,v_3,u,s\}$ induces a fork, a contradiction.
So, by symmetry, we may assume that $t\nsim v_3$. Since $T^+$ is complete to $\{v_1,v_2\}$ and $N(Q_1)\cap V(C)=\{v_1,v_2\}$, we see that $\{s,q_1,v_2,v_3,t\}$ induces a fork, a contradiction.
This proves Claim \ref{CLaimtu}.

Since $T^+\cup Q_1$ is complete to $\{v_1,v_2\}$ and $ T^+\cup Q_1$ is a clique by Claim \ref{CLaimtu}, we have that
\begin{flalign}\label{T+Q1v12}
\text{$T^+\cup Q_1\cup \{v_1,v_2\}$ is a clique.}
\end{flalign}
		
Let $Q'=Q\setminus Q_1$. Then $N(u)=Q'\cup D\cup (T^+\cup Q_1\cup \{v_1,v_2\})$. We next prove that 
\begin{flalign}\label{qunion}
\text{$Q'$ is the union of two cliques.}
\end{flalign}
Since $u$ is not a trisimplicial vertex, $Q'$ is not a clique.
So, we may assume that there are two nonadjacent vertices $q_1,q_2$ in $Q'$.
Suppose that $\{q_1,q_2\}$ is anticomplete to $\{v_3,v_n\}$.
If $\{q_1,q_2\}$ is anticomplete to $\{v_1,v_2\}$, then $\{v_3,v_2,u,q_1,q_2\}$ induces a fork, a contradiction.
So, $\{q_1,q_2\}$ is not anticomplete to $\{v_1,v_2\}$.
By symmetry, we may assume that $q_2\sim v_2$.
Since $q_2\in Q'$, we have that $q_2\nsim v_1$.
Furthermore, since $q_2\sim s$, we have that $q_2\sim v_3$ by Lemma \ref{CLMA1}, contrary to our assumption.
Therefore, $\{q_1,q_2\}$ is not anticomplete to $\{v_3,v_n\}$. So, we may assume up to symmetry that $q_2\sim v_3$.

By Lemma \ref{CLMA1}, we have that $N(q_2)\cap V(C)\in\{\{v_2,v_3\},\{v_3,v_4\}\}$.
Furthermore, to forbid a fork on $\{v_n,v_1,u,q_1,q_2\}$, we have that $N(q_1)\cap \{v_1,v_n\}\neq \emptyset$. We consider two cases:

\begin{case}\label{CASE1}
$q_1\sim v_1.$
\end{case}
By the definition of $Q'$, we have that $q_1\nsim v_2$.
Furthermore, by Lemma \ref{CLMA1}, we have that $N(q_1)\cap V(C)=\{v_1,v_n\}$. To forbid a fork on $\{v_n,q_1,u,v_2,q_2\}$, we have that $q_2\sim v_2$. Then $N(q_2)\cap V(C)=\{v_2,v_3\}$. It follows that $\{v_4,v_3,v_2,q_2,s,q_1,v_1\}$ induces a $5$-balloon. 
By the minimality of $B$, we have that $B$ is a $5$-balloon. We define the following sets.
\begin{flalign}
Q_2:&=\{q\in Q':N(q)\cap V(C)=\{v_2,v_3\}\},\notag\\
Q_5:&=\{q\in Q':N(q)\cap V(C)=\{v_5,v_1\}\}.\notag
\end{flalign}
\begin{claim}\label{q1523}
Let $q',q''\in Q'$ with $q'\nsim q''$. If $q'\in Q_5$ (resp., $q'\in Q_2$), then $q''\in Q_2$ (resp., $q''\in Q_5$).
\end{claim}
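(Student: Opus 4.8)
The plan is to reduce to the single case $q'\in Q_5$ by a symmetry argument, then use Lemma~\ref{CLMA1} to pin down $N(q'')\cap V(C)$ to one of four consecutive pairs on $C$, and finally eliminate the three unwanted pairs by exhibiting an induced fork in each.

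First I would record that in this case $B$ is a $5$-balloon, so $C=v_1v_2v_3v_4v_5v_1$ and $u$ is complete to $\{v_1,v_2\}$ and anticomplete to $\{v_3,v_4,v_5\}$. The whole situation — the graph $G$, the hole $C$, the center $u$, the pendant $s$, and the sets $D$, $Q$, $Q_1$, $Q'$ — is unchanged under relabelling $C$ by $v_1\leftrightarrow v_2$ and $v_3\leftrightarrow v_5$ (keeping $v_4$ fixed), since all of these objects are defined symmetrically in $v_1$ and $v_2$; and this relabelling interchanges $Q_2$ with $Q_5$. Hence it suffices to prove: if $q'\in Q_5$ and $q''\in Q'$ with $q'\nsim q''$, then $q''\in Q_2$; applying this in the relabelled picture yields the statement with $Q_2$ and $Q_5$ interchanged.

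So I would assume $q'\in Q_5$, meaning $q'\sim v_1$, $q'\sim v_5$, $q'\nsim v_2$, $q'\nsim v_3$, $q'\nsim v_4$, and $q'\sim u$ (as $q'\in Q'\subseteq N(u)$); let $q''\in Q'$ with $q'\nsim q''$. Since $q''\in Q'\subseteq N(u)$ and $q''\notin D$, the vertex $q''$ is not anticomplete to $V(C)$; since $q''$ is complete to $D$ it is adjacent to $s$, and $s$ is anticomplete to $V(C)$, so Lemma~\ref{CLMA1} gives $N(q'')\cap V(C)=\{v_j,v_{j+1}\}$ for some $j$ or $N(q'')\cap V(C)=V(C)$. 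In the second case $V(C)\cup\{q'',s\}$ would induce an odd parachute, which is impossible; and $N(q'')\cap V(C)\neq\{v_1,v_2\}$ because $q''\notin Q_1$. Thus $N(q'')\cap V(C)\in\{\{v_2,v_3\},\{v_3,v_4\},\{v_4,v_5\},\{v_5,v_1\}\}$, and it remains to rule out the last three. If $N(q'')\cap V(C)=\{v_5,v_1\}$, then $\{v_5,q',q'',v_4,v_3\}$ induces a fork (branch vertex $v_5$). If $N(q'')\cap V(C)=\{v_3,v_4\}$, then $\{u,q',q'',v_2,v_5\}$ induces a fork (branch vertex $u$). If $N(q'')\cap V(C)=\{v_4,v_5\}$, then $\{u,q',q'',v_2,v_3\}$ induces a fork (branch vertex $u$). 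Each possibility contradicts fork-freeness, so $N(q'')\cap V(C)=\{v_2,v_3\}$, i.e.\ $q''\in Q_2$.

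I do not expect a genuine obstacle here: once the symmetry reduction is set up, the argument is a short case analysis, and the only point needing a little care is spotting the right fork in the two middle cases — there is no induced fork inside $\{q',q''\}\cup V(C)$ alone, and one has to bring in $u$, using that $u$ is adjacent to both $q'$ and $q''$ (since $q',q''\in Q'\subseteq N(u)$) but to none of $v_3,v_4,v_5$. The remaining work is just checking adjacencies and non-adjacencies among the five listed vertices in each case.
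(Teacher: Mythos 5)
Your proof is correct and takes essentially the same route as the paper's: both reduce to the case $q'\in Q_5$ by the same relabelling symmetry, use Lemma~\ref{CLMA1} together with the common neighbour $s\in D$ (and odd-parachute-freeness plus $q''\notin Q_1$) to restrict $N(q'')\cap V(C)$ to consecutive pairs, and then eliminate the unwanted pairs by exhibiting induced forks through $u$. The paper merely orders the steps differently, first using the fork on $\{v_3,v_2,u,q',q''\}$ to force $N(q'')\cap\{v_2,v_3\}\neq\emptyset$ and thereby cut the case analysis to two pairs before finishing; your version carries four pairs and kills three of them directly, which is an equally valid bookkeeping choice.
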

Let $q',q''\in Q'$ with $q'\nsim q''$ and $q'\in Q_5$. To forbid a fork on $\{v_3,v_2,u,q',q''\}$, we have that $N(q'')\cap \{v_2,v_3\}\neq \emptyset$. By the definition of $Q'$,
it follows from Lemma \ref{CLMA1} that $N(q'')\cap V(C)\in\{\{v_2,v_3\},\{v_3,v_4\}\}$.
If $N(q'')\cap V(C)=\{v_3,v_4\}$, then $\{v_4,q'',u,q',v_2\}$ induces a fork, a contradiction.
So, $N(q'')\cap V(C)=\{v_2,v_3\}$.
That is, $q''\in Q_2$. The case that $q'\in Q_2$ is symmetric. This proves Claim \ref{q1523}.

By Claim \ref{vivj}, both $Q_2$ and $Q_5$ are cliques. Let $W=Q'\setminus(Q_2\cup Q_5)$. Suppose that $W\neq\emptyset$ and $w\in W$. Then $N(w)\cap V(C)\in\{\{v_3,v_4\},\{v_4,v_5\}\}$. By Claim \ref{q1523}, we have that $w$ is complete to $\{q_1,q_2\}$. By symmetry, we may assume that $N(w)\cap V(C)=\{v_3,v_4\}$.
But now $\{v_1,q_1,w,q_2,v_4\}$ induces a fork, a contradiction.
This proves that $W=\emptyset$. That is, $Q'$ is the union of two cliques. See Figure \ref{Figure F12} for a depiction.
\begin{case}\label{CASE2}
$q_1\nsim v_1$.
\end{case}
Since $N(q_1)\cap \{v_1,v_n\}\neq \emptyset$, we have that $N(q_1)\cap V(C)=\{v_{n-1}, v_n\}$ by Lemma \ref{CLMA1}.
If $n\geq 7$, then $\{v_{n-1},q_1,u,v_1,q_2\}$ induces an fork, a contradiction.
So, $n=5$, that is to say, $B$ is a $5$-balloon and $N(q_1)\cap V(C)=\{v_4,v_5\}$.
Note that $N(q_2)\cap V(C)\in\{\{v_2,v_3\},\{v_3,v_4\}\}$.
If $N(q_2)\cap V(C)=\{v_2,v_3\}$, then $\{v_{4},q_1,u,q_2,v_1\}$ induces an fork, a contradiction.
So, $N(q_2)\cap V(C)=\{v_3,v_4\}$.
We define the following sets.
\begin{flalign}
Q_3:&=\{q\in Q':N(q)\cap V(C)=\{v_3,v_4\}\},\notag\\
Q_4:&=\{q\in Q':N(q)\cap V(C)=\{v_4,v_5\}\}.\notag
\end{flalign}
\begin{claim}\label{q3445}
Let $q',q''\in Q'$ with $q'\nsim q''$. If $q'\in Q_4$ (resp., $q'\in Q_3$), then $q''\in Q_3$ (resp., $q''\in Q_4$).
\end{claim}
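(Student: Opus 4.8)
The plan is to argue in the spirit of the proof of Claim~\ref{q1523}, now exploiting that we are in Case~\ref{CASE2}, where $B$ has already been shown to be a $5$-balloon, so $V(C)=\{v_1,\dots,v_5\}$, and $Q_3,Q_4$ here play (up to a reflection of the $5$-cycle) the roles that $Q_2,Q_5$ played there.

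First I would pin down the possible ``types'' of an arbitrary vertex $q''\in Q'$. Since $q''\in Q'\subseteq Q\subseteq N(u)$ and $Q'\cap D=\emptyset$, the vertex $q''$ is adjacent to $u$, complete to $D$ (hence $q''\sim s$), not complete to $\{v_1,v_2\}$, and not anticomplete to $V(C)$. As $s$ is anticomplete to $V(C)$ while $q''$ is not, Lemma~\ref{CLMA1} forces $N(q'')\cap V(C)$ to be a pair of consecutive vertices of $C$ or all of $V(C)$; the latter is impossible, since then $C$ together with the edge $q''s$ would form an odd parachute, contradicting that $G$ is odd-parachute-free. Because $q''$ is not complete to $\{v_1,v_2\}$, this leaves
\[
 N(q'')\cap V(C)\in\bigl\{\,\{v_2,v_3\},\ \{v_3,v_4\},\ \{v_4,v_5\},\ \{v_5,v_1\}\,\bigr\}.
\]

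Now assume $q'\in Q_4$ with $q'\nsim q''$; the case $q'\in Q_3$ will follow by the symmetry of the configuration that fixes $u$, $s$, $D$, $Q$, $Q'$ and $v_4$ while interchanging $v_1\leftrightarrow v_2$ and $v_3\leftrightarrow v_5$. Since $N(q')\cap V(C)=\{v_4,v_5\}$ we have $q'\nsim v_1,v_2,v_3$. The first step is to discard the types $\{v_4,v_5\}$ and $\{v_5,v_1\}$ for $q''$: if $q''$ were nonadjacent to both $v_2$ and $v_3$, then $\{v_3,v_2,u,q',q''\}$ would induce a fork centred at $u$ (edges $v_3v_2$, $v_2u$, $uq'$, $uq''$), so $N(q'')\cap\{v_2,v_3\}\neq\emptyset$, which leaves only $\{v_2,v_3\}$ and $\{v_3,v_4\}$. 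The second step discards $\{v_2,v_3\}$: if $N(q'')\cap V(C)=\{v_2,v_3\}$, then $\{v_3,q'',u,q',v_1\}$ induces a fork centred at $u$ (edges $v_3q''$, $q''u$, $uq'$, $uv_1$, using that $v_1\nsim v_3$ in the $5$-cycle and $q'\nsim v_1$), a contradiction. Hence $N(q'')\cap V(C)=\{v_3,v_4\}$, i.e.\ $q''\in Q_3$, as desired.

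I do not foresee a genuine obstacle here; the only points that require care are checking the non-adjacencies behind each fork (that $u\nsim v_3,v_4$, that $v_1$ and $v_3$ are nonadjacent in the $5$-cycle, and that the ``type'' of $q'$ forces $q'\nsim v_1,v_2,v_3$ while the ``type'' of $q''$ provides the remaining non-neighbours), and verifying the hypotheses of Lemma~\ref{CLMA1} together with the odd-parachute-freeness of $G$. The reflection used in the $q'\in Q_3$ case is legitimate because every set and adjacency invoked in the argument is invariant under relabelling the $v_i$ by an automorphism of $C$.
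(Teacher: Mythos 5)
Your proof is correct and follows essentially the same route as the paper's: both first use Lemma~\ref{CLMA1} (via the neighbour $s\in D$) and the definition of $Q'$ to restrict $N(q'')\cap V(C)$ to a consecutive pair other than $\{v_1,v_2\}$, then apply the fork on $\{v_3,v_2,u,q',q''\}$ to force $N(q'')\cap\{v_2,v_3\}\neq\emptyset$ and the fork on $\{v_3,q'',u,q',v_1\}$ to exclude the type $\{v_2,v_3\}$, concluding $q''\in Q_3$ with the other case by symmetry. Your write-up merely makes explicit the non-adjacency checks and the reflection of $C$ that the paper leaves implicit.
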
 
Let $q',q''\in Q'$ with $q'\nsim q''$ and $q'\in Q_4$.
To forbid a fork on $\{v_3,v_2,u,q',q''\}$, we have that $N(q'')\cap \{v_2,v_3\}\neq \emptyset$. By the definition of $Q'$, it follows from Lemma \ref{CLMA1} that $N(q'')\cap V(C)\in\{\{v_2,v_3\},\{v_3,v_4\}\}$. If $N(q'')\cap V(C)=\{v_2,v_3\}$, then $\{v_3,q'',u,q',v_1\}$ induces a fork, a contradiction. So, $N(q'')\cap V(C)=\{v_3,v_4\}$. That is, $q''\in Q_3$. The case that $q''\in Q_3$ is symmetric. This proves Claim \ref{q3445}.
	
By Claim \ref{vivj}, both $Q_3$ and $Q_4$ are cliques. Let $W=Q'\setminus(Q_3\cup Q_4)$. Suppose that $W\neq \emptyset$ and $w\in W$. Then $N(w)\cap V(C)\in\{\{v_2,v_3\},\{v_1,v_5\}\}$. By Claim \ref{q3445}, we have that $w$ is complete to $\{q_1,q_2\}$.
By symmetry, we may assume that $N(w)\cap V(C)=\{v_2,v_3\}$.
But now $\{v_5,q_1,w,q_2,v_2\}$ induces a fork, a contradiction.
This proves that $W=\emptyset$. That is, $Q'$ is the union of two cliques. See Figure \ref{Figure F12} for a depiction.

By combining Cases \ref{CASE1} and \ref{CASE2}, this proves (\ref{qunion}).

\begin{figure}[h]
			\begin{center}
				\begin{tikzpicture}
					
					\tikzset{std node fill/.style={draw=black, circle, line width=1pt, inner sep=2pt}}

					\node[std node fill] (g1) at (5,-3) {\(v_1\)};
					
					\node[std node fill] (g2) at (6,-3) {\(v_2\)};
					
					\node[std node fill] (g3) at (6.5,-2) {\(v_3\)};
					
					\node[std node fill] (g4) at (5.5,-1) {\(v_4\)};
					
					\node[std node fill] (g5) at (4.5,-2) {\(v_5\)};
					
					\node[std node fill] (g6) at (5.5,-4.3) {\(u\)};
					
					\node[std node fill] (g7) at (5.5,-5.6) {\(s\)};
					
					\node[std node fill] (g8) at (3.5,-4) {\(Q_5\)};
					
					\node[std node fill] (g9) at (7.5,-4) {\(Q_2\)};
					
					\node[xshift=6pt] at (5.3,-6.2)  {$Q'=Q_2\cup Q_5$};
					
					\foreach \i/\j in {1/2,2/3,3/4,4/5,1/5,1/6,2/6,6/7,8/1,8/5,9/2,9/3,6/8,6/9,7/8,7/9}{\draw[blue,line width=1.5pt] (g\i)--(g\j);}

					\node[std node fill] (h1) at (11,-3) {\(v_1\)};
					
					\node[std node fill] (h2) at (12,-3) {\(v_2\)};
					
					\node[std node fill] (h3) at (12.5,-2) {\(v_3\)};
					
					\node[std node fill] (h4) at (11.5,-1) {\(v_4\)};
					
					\node[std node fill] (h5) at (10.5,-2) {\(v_5\)};
					
					\node[std node fill] (h6) at (11.5,-4.3) {\(u\)};
					
					\node[std node fill] (h7) at (11.5,-5.6) {\(s\)};
					
					\node[std node fill] (h8) at (9.5,-2) {\(Q_4\)};
					
					\node[std node fill] (h9) at (13.5,-2) {\(Q_3\)};
					
					\node[xshift=6pt] at (11.3,-6.2)  {$Q'=Q_3\cup Q_4$};
					
					\foreach \i/\j in {1/2,2/3,3/4,4/5,1/5,1/6,2/6,6/7,8/4,8/5,9/4,9/3,6/8,6/9,7/8,7/9}{\draw[blue,line width=1.5pt] (h\i)--(h\j);}
					
				\end{tikzpicture}
				\caption{\small {Illustrations of $Q'=F_1\cup F_2$.}}\label{Figure F12}
			\end{center}
		\end{figure}
Now, by (\ref{qunion}) we may let $F_1,F_2$ be two cliques such that $Q'=F_1\cup F_2$.
		Since $Q'$ is complete to $D$, we have that $F_1\cup D$ is a clique.
		Set $X_1=T^+\cup Q_1\cup \{v_1,v_2\}$, $X_2=F_1\cup D$ and $X_3=F_2$.
		Then $N(u)=X_1\cup X_2\cup X_3$.
		By (\ref{T+Q1v12}), we have that $X_1$ is a clique, which implies that $u$ is a trisimplicial vertex, a contradiction.
		This final contradiction completes the proof of Theorem \ref{forkgem+tri}.
\end{proof}
At last, as a corollary of Theorem \ref{forkgem+tri}, we can easily prove Corollary \ref{MTHM}.
	
	\begin{proof}[\bf Proof of Corollary \ref{MTHM}]
		Let $G$ be a $(\text{fork}, \text{odd parachute})$-free graph.
		If $G$ is perfectly divisible, then by a simple induction on $\omega(G)$ we have that $\chi(G)\leq \binom{\omega(G)+1}{2}$.
		So, we may assume that $G$ is nonperfectly divisible.
		It follows from Theorem \ref{forkgem+tri} that $G$ contains a trisimplicial vertex $u$.
		For $\omega(G)\leq 3$, we deduce the statement from a result of Randerath \cite{BR1993} that $\chi(G)\le4$.
		For $\omega(G)\geq 4$,	It follows that we can take any $\chi(G\setminus u)$-coloring of $G\setminus u$ and extend it to a $(\cfrac{1}{2}\omega(G)^2+\cfrac{1}{2}\omega(G))$-coloring of $G$.
		This completes the proof of Corollary \ref{MTHM}.
	\end{proof}

\begin{remark}
		We use $3P_1$ to denote the union of 3 copies of $P_1$.
		It is well known from \cite{CBBR2019} and \cite{BMS19} that the class of $3P_1$-free graphs does not admit a linear $\chi$-binding function, and since the class of $(\text{fork}, \text{odd parachute})$-free graphs is a superclass of $3P_1$-free graphs, it follows that the class of $(\text{fork}, \text{odd parachute})$-free graphs does not admit a linear $\chi$-binding function.
\end{remark}
At the end of this paper, to describe the structure of fork-free graphs, we prove that each nonperfectly divisible fork-free graph with small cliques has a trisimplicial vertex.
	This proof shares some ideas with the proof of Theorem \ref{forkgem+tri}. 
However, for the sake of completeness, we give the proof here.

\begin{theorem}\label{forkthw3}
If $G$ is a nonperfectly divisible fork-free graph with $\omega(G)\leq 3$, then $G$ has a trisimplicial vertex.
\end{theorem}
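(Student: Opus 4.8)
The plan is to rerun the proof of Theorem~\ref{forkgem+tri}, with the hypothesis $\omega(G)\le 3$ taking over the role played there by odd-parachute-freeness. So I would assume for contradiction that $G$ has no trisimplicial vertex. Since $G$ is fork-free and nonperfectly divisible, Theorem~\ref{ballon} provides an induced odd balloon in $G$; fix a minimum one, $B$, with center $u$, odd hole $C=v_1v_2\cdots v_nv_1$ (with $n\ge 5$ odd), and degree-one vertex $s$, normalised so that $u$ is complete to $\{v_1,v_2\}$ and anticomplete to $\{v_3,\dots,v_n\}$; note that $s$ is anticomplete to $V(C)$ since $B$ is induced. Most of the proof of Theorem~\ref{forkgem+tri} uses only that $G$ is fork-free together with the minimality of $B$, and I would copy all of those parts verbatim --- for instance Claims~\ref{vivj}, \ref{ALIGN3} and \ref{CLM0}, the proofs that $T_1,T_2,T_3$ and $D$ are cliques, and the determination of $N(t_1)\cap V(C)$ inside Claim~\ref{gem+Tclique}. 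The hypothesis $\omega(G)\le 3$ is needed exactly where that proof appeals to odd-parachute-freeness.

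Every such appeal has the same shape: inside an application of Lemma~\ref{CLMA1} to a vertex $r$ that has a neighbour anticomplete to $V(C)$, one must discard the alternative $N(r)\cap V(C)=V(C)$ and conclude that $N(r)\cap V(C)$ is a pair of consecutive hole vertices. I claim $\omega(G)\le 3$ suffices for this. At the two explicit occurrences --- the proofs that $T^+$ and that $Q_1$ are cliques, where $r$ ranges over $T^+\subseteq N(u)$ and over $Q_1\subseteq N(u)$ respectively --- and at the several places in the analysis of $Q'$ where Lemma~\ref{CLMA1} is applied to a vertex of $Q'\subseteq N(u)$, the vertex $r$ is adjacent to $u$, so $N(r)\cap V(C)=V(C)$ would make $\{u,r,v_1,v_2\}$ a $K_4$, contradicting $\omega(G)\le 3$. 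The only remaining occurrence is in the proof of Claim~\ref{gem+Tclique}, at the stage where $B$ has already been shown to be a $5$-balloon, a vertex $r\in N(v_5)\setminus\{v_1,v_4\}$ with $r\sim s$ is being treated, and two non-adjacent vertices $t_1,t_2$ are on hand with $u\sim t_i$, $s\nsim t_i$ and $N(t_i)\cap V(C)=\{v_1,v_2,v_3,v_4\}$. If $r$ were complete to $V(C)$, then $r\sim u$ gives the $K_4$ $\{u,r,v_1,v_2\}$; $r\sim t_1$ or $r\sim t_2$ gives the $K_4$ $\{r,t_i,v_3,v_4\}$; and if $r$ is adjacent to none of $u,t_1,t_2$, then $\{u,s,t_1,t_2,r\}$ induces a fork with center $u$, leaves $s,t_1,t_2$, and $r$ attached to $s$. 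All three are contradictions, so $N(r)\cap V(C)$ is again a pair of consecutive vertices. The companion statement $N(s)\cap N(v_3)=\emptyset$ is obtained in the same manner, with $v_3$ in place of $v_5$ and with the previously produced non-adjacent pair $r_3,r_4$ (satisfying $N(r_i)\cap V(C)=\{v_1,v_2,v_4,v_5\}$, so that $v_3\notin N(r_i)$) in place of $t_1,t_2$; then $N(s)\cap N(v_4)=\emptyset$, hence $N(s)$ is anticomplete to $N(\{v_3,v_4,v_5\})$, and the completeness of $N(s)\setminus V_s$ to $\{v_1,v_2\}$ all follow from Lemma~\ref{CLMA1} exactly as in Section~\ref{secproof}.

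With these replacements, the remainder of the proof of Theorem~\ref{forkgem+tri} is unchanged, and it ends with $N(u)=(T^+\cup Q_1\cup\{v_1,v_2\})\cup(F_1\cup D)\cup F_2$, a union of three cliques, contradicting the assumption that $u$ is not trisimplicial. I expect the main obstacle to be purely organisational: one has to walk through every application of Lemma~\ref{CLMA1} in Section~\ref{secproof} whose ``complete to $V(C)$'' branch was previously closed off by odd-parachute-freeness, check that the vertex involved is adjacent to $u$ (so the one-line $K_4$ argument applies) or else is the exceptional vertex above, and, in that exceptional case, verify carefully that $\{u,s,t_1,t_2,r\}$ genuinely induces a fork --- i.e.\ that no additional edge is present among those five vertices. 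I do not foresee any further difficulty, since every other step of the original argument relies only on fork-freeness and the minimality of $B$.
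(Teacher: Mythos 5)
Your proposal is correct, but it is not the route the paper takes. The paper's own proof of Theorem \ref{forkthw3} is a short, self-contained argument that exploits $\omega(G)\le 3$ from the outset rather than retrofitting the proof of Theorem \ref{forkgem+tri}: since the balloon contains the triangle $\{u,v_1,v_2\}$ one has $\omega(G)=3$; a fork chase as in Claim \ref{ALIGN3} shows that every vertex of $N(u)\setminus\{v_1,v_2,s\}$ is complete to the clique $D$ of $N(u)$-vertices anticomplete to $V(C)$; hence $|D|\le 2$, and either $N(u)=D\cup\{v_1,v_2\}$ or $D=\{s\}$ and $|N(u)\setminus\{v_1,v_2,s\}|\le 2$, so $N(u)$ is a union of three cliques in either case. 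Your adaptation does work: every appeal to odd-parachute-freeness in Section \ref{secproof} serves only to kill the branch $N(r)\cap V(C)=V(C)$ of Lemma \ref{CLMA1}, and your $K_4$/fork case analysis closes that branch under $\omega(G)\le 3$ --- including the one delicate occurrence at $r\in N(v_5)$ with $r\sim s$, where your three cases $r\sim u$, $r\sim t_i$, and the induced fork on $\{u,s,t_1,t_2,r\}$ all check out. What you lose is economy: under $\omega(G)\le 3$ much of the machinery you transport is vacuous (any $t\in T$ is complete to $\{v_1,v_2\}$ by Claim \ref{ALIGN3} and adjacent to $u$, so $\{u,t,v_1,v_2\}$ would be a $K_4$ and $T=\emptyset$, which trivializes Claim \ref{gem+Tclique}; likewise $T^+$ and $Q_1$ are forced empty). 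What you gain is a clear articulation of the point the paper only hints at when it says the two proofs ``share some ideas'': the parachute hypothesis in Theorem \ref{forkgem+tri} is used solely to exclude a vertex complete to $C$ having a neighbour anticomplete to $C$, and any hypothesis doing that job (here, $K_4$-freeness) yields the same conclusion.
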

\begin{proof}[\bf Proof of Theorem \ref{forkthw3}]
Let $G$ be a nonperfectly divisible fork-free graph with $\omega(G)\leq 3$. If $G$ contains no odd balloon, then $G$ is perfectly divisible by Theorem \ref{ballon}. Therefore, we may assume that $G$ contains an odd balloon. By the definition of odd balloon, $\omega(G)=3$. Let $B$ be an induced odd balloon of $G$, and $u$ be the center of $B$. Let $C=v_1v_2\ldots v_nv_1$ be the odd hole of $B$ such that $n\geq 5$ and $n$ is odd, and $s$ be the vertex of degree $1$ of $B$. Without loss of generality, we may assume that $u$ is complete to $\{v_1,v_2\}$ and anticomplete to $\{v_3, v_4,\ldots, v_n\}$.
		
Let $D$ denote the set of vertices in $N(u)\setminus \{v_1,v_2\}$ which are anticomplete to $V(C)$. Clearly, $s\in D$. If $D$ has two nonadjacent vertices $d_1$ and $d_2$, then $\{v_3,v_2,u,d_1,d_2\}$ induces a fork, a contradiction. Therefore, 
\begin{flalign}\label{ALIGN22}
\text{$D$ is a clique.}
\end{flalign}
\begin{claim}\label{CCLAIM12}
$N(u)\setminus\{v_1,v_2,s\}$ is complete to $D$.
\end{claim}
Suppose to the contrary that there exists $t\in N(u)\setminus\{v_1,v_2,s\}$ such that $t$ is not complete to $D$. Without loss of generality, suppose $t\nsim s$.	Since $\omega(G)=3$, we have that $t$ is not complete to $\{v_1,v_2\}$. So, we may assume up to symmetry that $t\nsim v_2$. To forbid a fork on $\{s,t,u,v_2,v_3\}$, we have that $t\sim v_3$. If $t\sim v_n$, then $\{s,u,t,v_3,v_n\}$ induces a fork, a contradiction. So, $t\nsim v_n$. To forbid a fork on $\{s,t,u,v_1,v_n\}$, we have that $t\sim v_1$. If $t\sim v_{n-1}$, then $\{v_{n-1},t,u,s,v_2\}$ induces a fork, a contradiction. So, $t\nsim v_{n-1}$. But now, $\{v_{n-1},v_n,v_1,v_2,t\}$ induces a fork, a contradiction. This proves Claim \ref{CCLAIM12}.

Since $\omega(G)=3$ and $D\cup \{u\}$ is a clique by (\ref{ALIGN22}), we have $|D|\leq 2$. If $|D|=2$, then $N(u)\setminus(D\cup \{v_1,v_2\})=\emptyset$ by Claim \ref{CCLAIM12}. This implies that $N(u)=D\cup \{v_1,v_2\}$ and hence $u$ is a trisimplicial vertex. Suppose now that $D=\{s\}$. Let $Q=N(u)\setminus\{v_1,v_2,s\}$. We now prove that 
\begin{flalign}\label{qunion2}
\text{$|Q|\leq 2$.}
\end{flalign}
		
Suppose to the contrary that  $|Q|\geq 3$.
By Claim \ref{CCLAIM12}, $Q$ is complete to $D$.
Since $\omega(G)=3$, $Q$ is an independent set.
Let $Q=\{q_1,q_2,\ldots,q_t\}$.
Since $\omega(G)=3$, each $q_i$ is not complete to $\{v_1,v_2\}$.
Suppose that $\{q_1,q_2\}$ is anticomplete to $\{v_3,v_n\}$.
If $\{q_1,q_2\}$ is anticomplete to $\{v_1,v_2\}$, then $\{v_3,v_2,u,q_1,q_2\}$ induces a fork, a contradiction.
So, $\{q_1,q_2\}$ is not anticomplete to $\{v_1,v_2\}$.
By symmetry, we may assume that $q_2\sim v_2$. 
Furthermore, since $q_2\sim s$ and $q_2\nsim v_1$, we have that $q_2\sim v_3$ by Lemma \ref{CLMA1}, contrary to our assumption.
Therefore, $\{q_1,q_2\}$ is not anticomplete to $\{v_3,v_n\}$.
We assume up to symmetry that $q_1\sim v_3$.  To forbid a fork on $\{v_3,q_1,u,q_2,q_3\}$, we have that $N(v_3)\cap \{q_2,q_3\}\neq \emptyset$.
By symmetry, we may assume that $v_3\sim q_2$. By Lemma \ref{CLMA1}, $\{q_1,q_2\}$ is anticomplete to $\{v_1,v_n\}$.
However, it follows that $\{u,q_1,q_2,v_1,v_n\}$ induces a fork, a contradiction. Thus, $|Q|\leq 2$. This proves (\ref{qunion2}).
		
By Claim \ref{CCLAIM12}, $Q$ is complete to $D$. It follows from (\ref{qunion2}) that $Q\cup D$ is the union of two cliques. Since $N(u)=\{s,v_1,v_2\}\cup Q$, we have that $u$ is a trisimplicial vertex on $G$. This completes the proof of Theorem \ref{forkthw3}.
\end{proof}
Based on Theorems \ref{forkgem+tri} and \ref{forkthw3}, and Corollary \ref{MTHM1}, the following question may be interesting.
	
\begin{problem}\label{forkconjw2}
Every fork-free graph is either perfectly divisible or has a trisimplicial vertex.
\end{problem}

\section*{Acknowledgement}  The first author was partially supported by the Youth Foundation of Fujian Province (Grant No. JZ240035), the Fujian Alliance of Mathematics (Grant No. 2025SXLMQN08), and the Minnan Normal University Foundation (Grant No. KJ2023002). The second author was partially supported by grant from the National Natural Sciences Foundation of China (No.12271170) and Science and Technology
Commission of Shanghai Municipality (STCSM) grant 22DZ2229014.
\section*{Declaration}

	\noindent$\textbf{Conflict~of~interest}$
	The authors declare that they have no known competing financial interests or personal relationships that could have appeared to influence the work reported in this paper.
	
	\noindent$\textbf{Data~availability}$
	Data sharing not applicable to this paper as no datasets were generated or analysed during the current study.

\end{document}